\newcommand{\bburl}[1]{\textcolor{blue}{\url{#1}}}
\newtheorem{thm}{Theorem}[section]
\newtheorem{cor}[thm]{Corollary}
\newtheorem{lem}[thm]{Lemma}
\newtheorem{prop}[thm]{Proposition}
\newtheorem{defi}[thm]{Definition}
\newtheorem{rek}[thm]{Remark}
\DeclareMathOperator{\supp}{supp}
\DeclareMathOperator{\sgn}{sgn}
\numberwithin{equation}{section}
\begin{document}

\title[Weighted Greedy Bases]{On weighted greedy-type bases}

\author{H\`ung Vi\d{\^e}t Chu}

\email{\textcolor{blue}{\href{mailto:hungchu2@illinois.edu}{hungchu2@illinois.edu}}}
\address{Department of Mathematics, University of Illinois at Urbana-Champaign, Urbana, IL 61820, USA}

\begin{abstract} 
In this paper, we study weights for the Thresholding Greedy Algorithm (TGA). While previous work focused on sequential weights $\varsigma = (s_n)_{n\in\mathbb{N}}$ on each positive integer, we study a more general weight $\omega = (w_A)_{A\subset\mathbb{N}}$ on each set $A\subset \mathbb{N}$. 
We define and characterize $\omega$-(almost) greedy bases. Furthermore, we leverage existing results to show that there exists an $\omega$-greedy unconditional basis that is not $\varsigma$-almost greedy for any weight sequence $\varsigma$. Last but not least, we show the equivalence between $\omega$-semi-greedy bases and $\omega$-almost greedy bases when $\omega$ is a so-called structured weight, thus considerably extending the equivalence previously known to hold for sequential weights. 
\end{abstract}

\subjclass[2020]{41A65; 46B15}

\keywords{Thresholding greedy algorithm, greedy, almost greedy, semi-greedy, partially greedy, weight}

\thanks{The author is thankful to Timur Oikhberg for helpful feedback on earlier drafts of this paper.}

\date{\today}

\maketitle

\section{Introduction}
\subsection{Background}

Let $(\mathbb{X}, \|\cdot\|)$ be a Banach space over the field $\mathbb{K} = \mathbb{R}$ or $\mathbb{C}$ with a semi-normalized Schauder basis $\mathcal{B} = (e_n)_{n=1}^\infty$ satisfying 
\begin{equation}\label{ed1} 0\ <\ c_1\ :=\ \inf_n\|e_n\|\ \le\ \sup_n\|e_n\| \ =:\ c_2<\infty.\end{equation}
Let $(e_n^*)_{n=1}^\infty\subset\mathbb{X}^*$ be the biorthogonal functionals such that $e_n^*(e_m) = \delta_{n,m}$. Every $x\in\mathbb{X}$ can be uniquely written as the series $\sum_{n=1}^\infty e_n^*(x)e_n$. Recall that the partial sum operators, defined as $S_m(x) = \sum_{n=1}^m e_n^*(x)e_n$, are uniformly bounded. We let $\mathbf K_b := \sup_{m}\|S_m\|$. It is easy to verify that for a semi-normalized basis, the corresponding biorthogonal functionals are also semi-normalized, i.e., 
\begin{equation}\label{ed2} 0\ <\ c^*_1\ :=\ \inf_n\|e^*_n\|\ \le\ \sup_n\|e^*_n\| \ =:\ c^*_2<\infty.\end{equation}

In 1999, Konyagin and Temlyakov \cite{KT1} introduced the \textit{Thresholding Greedy Algorithm} (TGA) to approximate each vector $x$ using finite linear combinations of basis vectors. In particular, for each $x\in\mathbb{X}$ and $m\in\mathbb{N}$, a set $\Lambda_m(x)$ is called a \textit{greedy set} of order $m$ of $x$ if $|\Lambda_m(x)| = m$ and
$$\min_{n\in \Lambda_m(x)}|e_n^*(x)|\ \ge\ \max_{n\notin \Lambda_m(x)}|e_n^*(x)|.$$
For $x\in\mathbb{X}$, the TGA produces a sequence of approximating greedy sums 
$(G_m(x))_{m=1}^\infty$, where $G_m(x):= \sum_{n\in\Lambda_m}e_n^*(x)e_n$. Here $G_m(x)$ depends on $\Lambda_m(x)$. A basis is \textit{quasi-greedy} if there exists $C\ge 1$ such that
\begin{equation}\label{e1}\|x-G_m(x)\|\ \le\ C\|x\|, \forall x\in\mathbb{X}, \forall m\in\mathbb{N}, \forall \Lambda_m(x).\end{equation}
The least constant $C$ satisfying \eqref{e1} is denoted by $\mathbf{C}_\ell$, and we say $\mathcal{B}$ is $\mathbf C_\ell$-suppression quasi-greedy.
A basis is \textit{greedy} if the TGA gives essentially the best approximation, i.e., there exists a constant $C\ge 1$ such that 
$$\|x-G_m(x)\|\ \le\ C\sigma_m(x),\forall x\in\mathbb{X}, \forall m\in\mathbb{N}, \forall \Lambda_m(x),$$
where 
$$\sigma_m(x)\ :=\ \inf\left\{\left\|x-\sum_{n\in A}a_ne_n\right\|\,:\, A\subset\mathbb{N}, |A| = m, a_n\in \mathbb{K}\right\}.$$
A basis is \textit{almost greedy} if the TGA gives essentially the best projection approximation, i.e., there exists a constant $C\ge 1$ such that 
$$\|x-G_m(x)\|\ \le\ C\widetilde{\sigma}_m(x),\forall x\in\mathbb{X}, \forall m\in\mathbb{N},\forall \Lambda_m(x),$$
where 
$$\widetilde{\sigma}_m(x)\ :=\ \inf\left\{\left\|x-\sum_{n\in A}e_n^*(x)e_n\right\|\,:\, A\subset\mathbb{N}, |A| = m\right\}.$$
A beautiful theorem of Konyagin and Temlyakov \cite{KT1} characterizes greedy bases as being unconditional and democratic (defined later.) In the same spirit, Dilworth et al. \cite{DKKT} characterized almost greedy bases as being quasi-greedy and democratic. 

As a variant of (almost) greedy bases, one can introduce the weighted version, where a weight sequence $\varsigma = (s_n)_{n=1}^\infty \in (0,\infty)^{\mathbb{N}}$ is involved. Given a set $A\subset \mathbb{N}$, the weight of $A$ is $s(A):= \sum_{n\in A} s_n$. For $\alpha\ge 0$, we define the error $\sigma_{\alpha}^\varsigma(x)$ and $\widetilde{\sigma}_\alpha^\varsigma(x)$
\begin{align*}
    \sigma_\alpha^\varsigma(x)&\ :=\ \inf\left\{\left\|x-\sum_{n\in A}a_ne_n\right\|\,:\, |A|<\infty, s(A)\le \alpha, a_n\in \mathbb{K}\right\},\mbox{ and }\\
    \widetilde{\sigma}_\alpha^\varsigma(x)&\ :=\ \inf\left\{\left\|x-\sum_{n\in A}e_n^*(x)e_n\right\|\,:\, |A|<\infty, s(A)\le \alpha\right\}.
\end{align*}

\begin{defi}\normalfont
A basis $\mathcal{B}$ is
\begin{enumerate}
\item $\varsigma$-greedy if there exists a constant $C\ge 1$ such that 
\begin{equation}\label{e3}\|x-G_m(x)\|\ \le\ C\sigma_{s(\Lambda_m(x))}^\varsigma(x), \forall x\in \mathbb{X}, \forall m\in\mathbb{N}, \forall \Lambda_m(x).\end{equation}
\item $\varsigma$-almost greedy if there exists a constant $C\ge 1$ such that 
\begin{equation}\label{e4}\|x-G_m(x)\|\ \le\ C\widetilde{\sigma}_{s(\Lambda_m(x))}^\varsigma(x), \forall x\in \mathbb{X}, \forall m\in\mathbb{N}, \forall \Lambda_m(x).\end{equation}
\end{enumerate}
\end{defi}

Weighted greedy-type bases have received much attention and witnessed progresses in various directions: see \cite{BL,  BDKOW, B1, DKTW, K}. Specifically, \cite{BDKOW, DKTW} characterized weighted greedy and weighted almost greedy bases; \cite{BL, B1, DKTW} studied weighted weak semi-greedy bases and weighted semi-greedy bases; \cite{K} investigated weighted partially greedy and weighted reverse partially greedy bases. It follows trivially from \cite[Remark 3.3 and Theorem 3.11]{K} that there exists a basis that is weighted partially greedy but is not partially greedy. Furthermore, \cite[Remark 4.10]{DKTW} gives an example of a basis that is weighted greedy but is not almost greedy. 

\subsection{Weights on sets and main goals}
This paper generalizes sequential weights $\varsigma$ to weights on sets $\omega$. First, we characterize weighted (almost) greedy bases (Section \ref{gag}). We show that weights on sets are very general in the sense that a basis is unconditional if and only if it is $\omega$-greedy for some $\omega$. Similarly, a basis is quasi-greedy if and only if it is $\omega$-almost greedy for some $\omega$ (see Corollary \ref{cc}.) 

For our next result, it is worth noting that S. J. Dilworth et al. \cite{DKTW} briefly discussed a generalization of sequential weights $\varsigma$, denoted by $\nu$. Here $\nu$ satisfies 
$$\nu(\emptyset) \ =\ 0 \mbox{ and }\nu(A)\le \nu(B)\Longrightarrow \nu(A\backslash B)\le \nu(B\backslash A).$$ 
In \cite[Remark 2.7]{DKTW}, the authors provided an example of a weight $\nu$ on subsets of $\mathbb{N}$ that cannot be obtained by any sequential weight $\varsigma$. However, it was not known if a $\nu$-weighted basis must be $\varsigma$-weighted for some sequential weight $\varsigma$. Motivated by this, we leverage recent results to obtain an example of an $\omega$-greedy basis that is not $\varsigma$-greedy for any $\varsigma$ (Theorem \ref{m3}.) 

Last but not least, we study $\omega$-semi-greedy and $\omega$-partially greedy bases. One notable result is the equivalence between $\omega$-semi-greedy and $\omega$-almost greedy bases (Theorem \ref{m8}), which considerably extends the same equivalence known to hold for sequential weights $\varsigma$.

To prepare for the next section, we give a formal definition of a general weight $\omega$ on sets and $\omega$-(almost) greedy bases. 

\begin{defi}\label{dw}\normalfont Let $\mathcal{P}(\mathbb{N})$ be the power set of $\mathbb{N}$.
A weight on sets is a nonnegative function $\omega: \mathcal{P}(\mathbb{N})\rightarrow [0,\infty]$ such that
\begin{itemize}
    \item $w(\emptyset) = 0$,
    \item $w(A) \in (0,\infty]$ for each nonempty $A\subset\mathbb{N}$.
\end{itemize}
\end{defi}

For $B\subset \mathbb{N}$, define
$$\sigma^{\omega}_B(x)\ :=\ \inf\left\{\left\|x-\sum_{n\in A}a_ne_n\right\|\,:\, A\in\mathbb{N^{<\infty}}, w(A\backslash B)\le w(B\backslash A),  a_n\in \mathbb{K}\right\},$$
where $\mathbb{N^{<\infty}}$ is the set of all finite subsets of $\mathbb{N}$.

The following definition of $\omega$-greedy bases generalizes the classical weighted bases (see \cite[Definition 1.1]{BDKOW}.)

\begin{defi}\label{dg}\normalfont
A basis $\mathcal{B}$ is $\omega$-greedy if there exists a constant $C\ge 1$ such that for all $x\in \mathbb{X}$, $m\in\mathbb{N}$, and $\Lambda_m(x)$,
$$\|x-G_m(x)\|\ \le\ C\sigma^{\omega}_{\Lambda_m(x)}(x).$$
The least constant $C$ is denoted by $\mathbf C^\omega_g$.
\end{defi}

In a similar manner, we define $\omega$-almost greedy. For $B\subset \mathbb{N}$, let
$$\widetilde{\sigma}^{\omega}_B(x)\ :=\ \inf\left\{\left\|x-P_A(x)\right\|\,:\, A\in\mathbb{N^{<\infty}}, w(A\backslash B)\le w(B\backslash A)\right\},$$
where $P_A(x) = \sum_{n\in A}e_n^*(x)e_n$.

\begin{defi}\label{dag}\normalfont
A basis $\mathcal{B}$ is $\omega$-almost greedy if there exists a constant $C\ge 1$ such that for all $x\in \mathbb{X}$, $m\in\mathbb{N}$, and $\Lambda_m(x)$, 
$$\|x-G_m(x)\|\ \le\ C\widetilde{\sigma}^{\omega}_{\Lambda_m(x)}(x).$$
The least constant $C$ is denoted by $\mathbf C^\omega_{al}$.
\end{defi}

\begin{rek}\normalfont
It is easy to check that \eqref{e3} and \eqref{e4} are special cases of Definitions \ref{dg} and \ref{dag}, respectively. 
\end{rek}

\section{Characterizations of $\omega$-(almost) greedy bases}\label{gag}

In order to characterize $\omega$-(almost) greedy bases, we need the notion of unconditionality and $\omega$-Property (A). 

\begin{defi}\normalfont
A basis $\mathcal{B}$ is unconditional if there exists $C\ge 1$ such that
$$\|P_A (x)\|\le C\|x\|,\forall x\in \mathbb{X}, \forall A\subset\mathbb{N}.$$
In this case, we say that $\mathcal B$ is $C$-suppression unconditional. The least such $C$ is denoted by $\mathbf K_s$. For an unconditional basis, there also exists a constant $\mathbf K_u$ such that 
$$\left\|\sum_{n=1}^N a_ne_n\right\|\ \le\ \mathbf K_u\left\|\sum_{n=1}^N b_ne_n\right\|,$$
for all $N\ge 1$ and for all scalars $a_n, b_n$ with $|a_n|\le |b_n|$.
\end{defi}

Let
$$1_A \ =\ \sum_{n\in A}e_n \mbox{ and } 1_{\varepsilon A}\ =\ \sum_{n\in A}\varepsilon_n e_n,$$
where $\varepsilon = (\varepsilon_n)_{n=1}^\infty\subset \mathbb{K}$ and $|\varepsilon_n| = 1$. For $x\in \mathbb{X}$, $\supp(x): = \{n: e_n^*(x)\neq 0\}$, $\|x\|_\infty := \sup_{n}|e_n^*(x)|$, and we write $A \sqcup B\sqcup x$ to indicate that $A, B$, and $\supp(x)$ are pairwise disjoint. Finally, $\sgn(e_n^*(x)) = \begin{cases}1&\mbox{ if }e_n^*(x)=0,\\ e_n^*(x)/|e_n^*(x)|&\mbox{ if }e_n^*(x)\neq 0.\end{cases}$

\begin{defi}\label{dA}\normalfont
A basis $\mathcal{B}$ has $\omega$-Property (A) if there exists $C\ge 1$ such that 
$$\|x+1_{\varepsilon A}\|\ \le\ C\|x+1_{\delta B}\|,$$
for all $A, B\in \mathbb{N}^{<\infty}$ with $w(A)\le w(B)$, signs $(\varepsilon), (\delta)$, and $x\in\mathbb{X}$ with $\|x\|_\infty\le 1$ and $A\sqcup B\sqcup x$. The least constant $C$ is denoted by $\mathbf C^{\omega}_b$.
\end{defi}

\begin{thm}\label{m1}
Let $\mathcal{B}$ be a basis and $\omega$ be a weight on subsets of $\mathbb{N}$. 
\begin{enumerate}
    \item If $\mathcal{B}$ is $\mathbf C_g^\omega$-$\omega$-greedy, then $\mathcal{B}$ is $\mathbf C_g^\omega$-suppression unconditional and satisfies $\mathbf C_g^\omega$-$\omega$-Property (A). 
    \item If $\mathcal{B}$ is $\mathbf K_s$-suppression unconditional and satisfies $\mathbf C^{\omega}_b$-$\omega$-Property (A), then $\mathcal{B}$ is $\mathbf K_s \mathbf C^{\omega}_b$-$\omega$-greedy.
\end{enumerate}
\end{thm}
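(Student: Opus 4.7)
Proof plan. Both implications will be proved directly.

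\textbf{(1).} Assume $\mathcal B$ is $\mathbf C_g^\omega$-$\omega$-greedy. For suppression unconditionality, fix $x\in\mathbb X$ and a finite $B\subset\supp(x)$ (replacing $B$ by $B\cap\supp(x)$ if necessary). Choose $\alpha>\|x\|_\infty$ and set
\[ y\ :=\ (x-P_B(x))+\alpha\sum_{n\in B}\sgn(e_n^*(x))\,e_n. \]
Then $|e_n^*(y)|=\alpha$ on $B$ and $<\alpha$ elsewhere, so $B$ is the greedy set of order $|B|$ of $y$ and $y-G_{|B|}(y)=x-P_B(x)$. In the infimum defining $\sigma^\omega_B(y)$, take $A:=B$ (trivially admissible: $w(\emptyset)\le w(\emptyset)$) with scalars $a_n:=\alpha\sgn(e_n^*(x))-e_n^*(x)$; this makes $y-\sum_{n\in B}a_n e_n=x$, so $\|x-P_B(x)\|\le\mathbf C_g^\omega\|x\|$. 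For $\omega$-Property (A), with $A,B,\varepsilon,\delta,x$ as in Definition \ref{dA} and $\eta>0$, set
\[ y\ :=\ x+1_{\varepsilon A}+(1+\eta)1_{\delta B}. \]
The magnitude $1+\eta$ on $B$ strictly dominates the other magnitudes ($1$ on $A$, $\le 1$ on $\supp(x)$), so $B$ is the greedy set of order $|B|$ and $y-G_{|B|}(y)=x+1_{\varepsilon A}$. Taking $A'=A$ with $a_n=\varepsilon_n$ in the infimum (admissible because $A\sqcup B$ reduces the constraint to $w(A)\le w(B)$) gives $y-\sum a_n e_n=x+(1+\eta)1_{\delta B}$, so $\|x+1_{\varepsilon A}\|\le\mathbf C_g^\omega\|x+(1+\eta)1_{\delta B}\|$; let $\eta\to 0^+$.

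\textbf{(2).} Fix $x$, $\Lambda=\Lambda_m(x)$, a finite $A$ with $w(A\setminus\Lambda)\le w(\Lambda\setminus A)$, and scalars $(a_n)_{n\in A}$; set $z:=x-\sum_{n\in A}a_n e_n$ and $t:=\min_{n\in\Lambda}|e_n^*(x)|$, so $|e_n^*(x)|\le t$ for $n\notin\Lambda$. Since $x$ and $z$ agree off $A$,
\[ P_{A^c\cap\supp(x)}(x)\ =\ P_{\Lambda^c\setminus A}(x)+P_{\Lambda\setminus A}(x)\ =\ P_{A^c}(z), \]
and suppression unconditionality yields $\|P_{\Lambda^c\setminus A}(x)+P_{\Lambda\setminus A}(x)\|\le\mathbf K_s\|z\|$. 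It remains to swap the ``$\Lambda\setminus A$''-piece with the ``$A\setminus\Lambda$''-piece via $\omega$-Property (A): the background $P_{\supp(x)\setminus(A\cup\Lambda)}(x)$ has $\|\cdot\|_\infty\le t$, the disjoint sets $A\setminus\Lambda$, $\Lambda\setminus A$ satisfy $w(A\setminus\Lambda)\le w(\Lambda\setminus A)$, and the coefficients on them have magnitudes $\le t$ and $\ge t$. After rescaling by $1/t$ and matching signs with $\sgn(e_n^*(x))$ on both sides, Property (A) produces $\|P_{\Lambda^c}(x)\|\le\mathbf C_b^\omega\|P_{\Lambda^c\setminus A}(x)+P_{\Lambda\setminus A}(x)\|\le\mathbf K_s\mathbf C_b^\omega\|z\|$, which is exactly the required bound.

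\textbf{Main obstacle.} Obtaining the sharp product constant $\mathbf K_s\mathbf C_b^\omega$ (rather than a larger expression such as $\mathbf K_s(1+\mathbf K_u^2\mathbf C_b^\omega)$ that would arise from a crude triangle-inequality decomposition of $P_{\Lambda^c}(x)=P_{\Lambda^c\setminus A}(x)+P_{A\setminus\Lambda}(x)$) hinges on performing the swap jointly, in a single application of Property (A). The greedy-set gap $\max_{n\in A\setminus\Lambda}|e_n^*(x)|\le t\le\min_{n\in\Lambda\setminus A}|e_n^*(x)|$, combined with the sign alignment $\varepsilon_n=\delta_n=\sgn(e_n^*(x))$, must be arranged so that the indicator-to-projection comparisons on the two sides absorb into $P_{A\setminus\Lambda}(x)$ and $P_{\Lambda\setminus A}(x)$ without introducing extra lattice-unconditional constants. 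This is the main technical hurdle, paralleling the delicate constant-tracking in the classical Konyagin-Temlyakov characterization now transplanted to the weight-on-sets framework.
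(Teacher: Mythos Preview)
Part (1) is fine and essentially matches the paper (the $\eta$-perturbation is unnecessary, since the definition of greedy set allows ties, but it does no harm).

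Part (2) contains a genuine gap at the ``swap'' step. You claim that after rescaling by $1/t$, $\omega$-Property~(A) alone yields
\[
\|P_{\Lambda^c}(x)\|\ \le\ \mathbf C_b^\omega\,\|P_{\Lambda^c\setminus A}(x)+P_{\Lambda\setminus A}(x)\|.
\]
But Property~(A), even in its reformulated version (Lemma~\ref{l1}), only compares against an \emph{indicator} $1_{\delta(\Lambda\setminus A)}$ on the right, not against the projection $P_{\Lambda\setminus A}(x)/t$ whose coefficients have magnitude $\ge 1$. Concretely: convexity handles the left side ($|e_n^*(x)|/t\le 1$ on $A\setminus\Lambda$), and Lemma~\ref{l1} then gives $\|P_{\Lambda^c}(x)\|\le\mathbf C_b^\omega\|P_{(A\cup\Lambda)^c}(x)+t\,1_{\varepsilon(\Lambda\setminus A)}\|$; but passing from $t\,1_{\varepsilon(\Lambda\setminus A)}$ to $P_{\Lambda\setminus A}(x)$ is a sign-aligned monotonicity step that costs an additional $\mathbf K_s$ (Proposition~\ref{p1}). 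Your order of operations (first $\mathbf K_s$ to reach $P_{A^c}(x)$, then the swap) therefore delivers $\mathbf K_s^2\mathbf C_b^\omega$, not $\mathbf K_s\mathbf C_b^\omega$. You flag exactly this hurdle in your ``Main obstacle'' paragraph, but the sketch above it does not resolve it.

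The paper avoids the extra factor by reversing the order: first apply Lemma~\ref{l1} to obtain $\|x-P_\Lambda(x)\|\le\mathbf C_b^\omega\|P_{(A\cup\Lambda)^c}(x)+t\,1_{\varepsilon(\Lambda\setminus A)}\|$, and then apply Proposition~\ref{p1} \emph{directly to $z=x-\sum_{n\in A}a_n e_n$}, exploiting that the intermediate vector has zero coefficients on $A$ (so the ``$a_n=0$'' clause of Proposition~\ref{p1} absorbs the arbitrary $A$-part of $z$). This single use of $\mathbf K_s$ simultaneously enlarges $t\,\varepsilon_n$ to $e_n^*(x)$ on $\Lambda\setminus A$ and restores the $A$-coefficients, yielding the sharp $\mathbf K_s\mathbf C_b^\omega$.
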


First, we need an useful reformulation of $\omega$-Property (A). 
\begin{lem}\label{l1}
A basis $\mathcal{B}$ has $\mathbf C_b^\omega$-$\omega$-Property (A) if and only if 
\begin{equation}\label{e2}\|x\|\ \le\ \mathbf C_b^\omega\|x-P_A(x) + 1_{\varepsilon B}\|,\end{equation}
for all $x\in\mathbb{X}$ with $\|x\|_\infty\le 1$, $A, B\in \mathbb{N}^{<\infty}$ with $w(A) \le w(B)$ and $B\cap (A\cup\supp(x)) = \emptyset$, and sign $(\varepsilon)$.
\end{lem}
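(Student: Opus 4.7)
The plan is to prove both implications directly, with the heart of the argument being a convex-combination decomposition of $P_A(x)$.

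For the forward direction, I would assume $\mathcal{B}$ has $\mathbf{C}_b^\omega$-$\omega$-Property (A), fix $x$ with $\|x\|_\infty\le 1$ and $A,B\in\mathbb{N}^{<\infty}$ with $w(A)\le w(B)$ and $B\cap(A\cup\supp(x))=\emptyset$, and set $z:=x-P_A(x)$. Then $\|z\|_\infty\le 1$, $\supp(z)\cap A=\emptyset$, and $\supp(z)\cap B\subset\supp(x)\cap B=\emptyset$, so the configuration $A\sqcup B\sqcup z$ required by Property (A) holds. The key step is to express $P_A(x)$ as a finite convex combination of ``sign vectors'': since each coordinate $e_n^*(x)$ lies in the closed unit ball of $\mathbb{K}$, which is the convex hull of $\{c\in\mathbb{K}:|c|=1\}$, the tuple $(e_n^*(x))_{n\in A}$ lies in the convex hull of the product of unit spheres, and Carath\'eodory's theorem yields scalars $\lambda_j\ge 0$ with $\sum_j\lambda_j=1$ and signs $\varepsilon^{(j)}$ on $A$ with
$$P_A(x)\ =\ \sum_j \lambda_j\, 1_{\varepsilon^{(j)} A}, \qquad \text{hence} \qquad x\ =\ \sum_j \lambda_j\bigl(z+1_{\varepsilon^{(j)} A}\bigr).$$
Applying $\omega$-Property (A) to $z$ (with the sets $A,B$ in the same roles and signs $\varepsilon^{(j)}$ on $A$, $\varepsilon$ on $B$) gives $\|z+1_{\varepsilon^{(j)} A}\|\le \mathbf{C}_b^\omega\|z+1_{\varepsilon B}\|$ for each $j$, and the triangle inequality on the convex combination closes out:
$$\|x\|\ \le\ \sum_j\lambda_j\|z+1_{\varepsilon^{(j)} A}\|\ \le\ \mathbf{C}_b^\omega\|z+1_{\varepsilon B}\|\ =\ \mathbf{C}_b^\omega\|x-P_A(x)+1_{\varepsilon B}\|.$$

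For the converse, I would argue by a clean substitution. Assume \eqref{e2}, and fix data $A,B$ with $w(A)\le w(B)$, signs $\varepsilon,\delta$, and $x$ with $\|x\|_\infty\le 1$ and $A\sqcup B\sqcup x$. Set $y:=x+1_{\varepsilon A}$. Since $\supp(x)\cap A=\emptyset$, one has $P_A(y)=1_{\varepsilon A}$ and hence $y-P_A(y)=x$; moreover $|e_n^*(y)|\le 1$ for every $n$ (either $e_n^*(y)=e_n^*(x)$ or $e_n^*(y)=\varepsilon_n$), so $\|y\|_\infty\le 1$, and $B\cap(A\cup\supp(y))\subset B\cap(A\cup\supp(x))=\emptyset$. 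Applying \eqref{e2} to $y$ with sign $\delta$ on $B$ then yields
$$\|x+1_{\varepsilon A}\|\ =\ \|y\|\ \le\ \mathbf{C}_b^\omega\|y-P_A(y)+1_{\delta B}\|\ =\ \mathbf{C}_b^\omega\|x+1_{\delta B}\|,$$
which is precisely $\mathbf{C}_b^\omega$-$\omega$-Property (A).

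The only non-routine ingredient is the Carath\'eodory decomposition of $P_A(x)$ in the forward direction; it requires a small verification in the complex case, where one uses the finite-dimensional identity $\prod_{n\in A}\{c\in\mathbb{K}:|c|\le 1\}=\mathrm{conv}\bigl(\prod_{n\in A}\{c\in\mathbb{K}:|c|=1\}\bigr)$. Everything else is bookkeeping: tracking disjointness of supports, preservation of $\|\cdot\|_\infty\le 1$ under both substitutions $z=x-P_A(x)$ and $y=x+1_{\varepsilon A}$, and the fact that the weight inequality $w(A)\le w(B)$ passes through unchanged.
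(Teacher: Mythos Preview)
Your proof is correct and follows essentially the same approach as the paper's. The only cosmetic difference is in the forward direction: where you spell out the Carath\'eodory decomposition of $P_A(x)$ explicitly, the paper compresses the same convexity idea into the single line $\|x\|=\|(x-P_A(x))+P_A(x)\|\le \sup_{(\delta)}\|(x-P_A(x))+1_{\delta A}\|$ and then applies Property~(A); the converse direction via the substitution $y=x+1_{\varepsilon A}$ is identical.
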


\begin{proof}
Assume that $\mathcal{B}$ has $\mathbf C_b^\omega$-$\omega$-Property (A). Let $x, A, B, (\varepsilon)$ be chosen as in \eqref{e2}. We have
\begin{align*}\|x\|\ =\ \left\|x-P_A(x) + \sum_{n\in A}e_n^*(x)e_n\right\|&\ \le\ \sup_{(\delta)}\left\|x-P_A(x) + 1_{\delta A}\right\|\\
&\ \le\ \mathbf C_b^\omega\left\|x-P_A(x) + 1_{\varepsilon B}\right\|,
\end{align*}
as desired.

Next, assume that $\mathcal{B}$ satisfies \eqref{e2}. Let $x, A, B, (\varepsilon), (\delta)$ be chosen as in Definition \ref{dA}. Let $y = x+ 1_{\varepsilon A}$. By \eqref{e2},
$$\|x+1_{\varepsilon A}\|  \ =\ \|y\|\ \le\ \mathbf C_b^\omega\|y-P_A(y) + 1_{\delta B}\|\ =\ \mathbf C_b^\omega\|x + 1_{\delta B}\|.$$
This completes our proof.
\end{proof}

\begin{prop}\label{p1} Let $\mathcal B = (e_n)_{n=1}^\infty$ be a $\mathbf K_s$-suppression unconditional basis. Fix $N\in \mathbb{N}$. For any scalars $a_1, \ldots, a_N, b_1, \ldots, b_N$ so that either $a_0 = 0$ or $\sgn(a_n) = \sgn(b_n)$ and $|a_n|\leqslant |b_n|$ for all $1\le n\le N$, we have
$$\left\|\sum_{n=1}^N a_ne_n\right\|\ \le \ \mathbf{K}_s\left\|\sum_{n=1}^N b_n e_n\right\|.$$
\end{prop}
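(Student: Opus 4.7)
The plan is to realize $\sum a_n e_n$ as a convex (or subconvex) combination of suppression projections of $\sum b_n e_n$, so that one application of the suppression unconditional bound yields the conclusion with constant exactly $\mathbf{K}_s$. Interpreting the statement with the natural correction ``either $a_n=0$ or $\sgn(a_n)=\sgn(b_n)$ and $|a_n|\le|b_n|$,'' I first observe that whenever $b_n=0$ we must have $a_n=0$ (from $|a_n|\le|b_n|=0$ in the nonzero-case, and trivially otherwise), so those indices contribute $0$ to both sums and can be discarded.

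For the remaining indices, define $\lambda_n := a_n/b_n$. In the case $a_n=0$ this is $0$, and in the case where $\sgn(a_n)=\sgn(b_n)$ (with the convention from the excerpt making this a unimodular complex ratio) together with $|a_n|\le |b_n|$, we obtain $\lambda_n=|a_n|/|b_n|\in[0,1]$. Thus $\sum_{n=1}^N a_n e_n=\sum_{n=1}^N \lambda_n b_n e_n$, and the task reduces to bounding the norm of a diagonal contraction (with scalars in $[0,1]$) of the vector $y:=\sum_{n=1}^N b_n e_n$.

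I would then perform an Abel-type summation. Relabel the indices via a permutation $\sigma$ so that $0=:\lambda_{\sigma(0)}\le \lambda_{\sigma(1)}\le\cdots\le\lambda_{\sigma(N)}\le 1$, and set $A_k:=\{\sigma(k),\sigma(k+1),\ldots,\sigma(N)\}$ for $k=1,\ldots,N$. A direct rewrite yields
\[
\sum_{n=1}^N a_n e_n\ =\ \sum_{n=1}^N \lambda_n b_n e_n\ =\ \sum_{k=1}^N \bigl(\lambda_{\sigma(k)}-\lambda_{\sigma(k-1)}\bigr)\, P_{A_k}(y).
\]
The coefficients are nonnegative and sum to $\lambda_{\sigma(N)}\le 1$. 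Applying the triangle inequality, then $\|P_{A_k}(y)\|\le \mathbf{K}_s\|y\|$ from suppression unconditionality, gives
\[
\Bigl\|\sum_{n=1}^N a_n e_n\Bigr\|\ \le\ \sum_{k=1}^N \bigl(\lambda_{\sigma(k)}-\lambda_{\sigma(k-1)}\bigr)\,\mathbf{K}_s\|y\|\ \le\ \mathbf{K}_s\Bigl\|\sum_{n=1}^N b_n e_n\Bigr\|.
\]

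The only subtlety is the careful handling of the sign convention, and verifying that the ``$a_n=0$'' alternative is absorbed smoothly by allowing $\lambda_n=0$; no genuine obstacle arises, and all computations take place in the finite-dimensional span of $e_1,\ldots,e_N$, so no measurability or integration issues appear.
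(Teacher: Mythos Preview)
Your proof is correct. The paper itself does not give an argument here at all—it simply cites \cite[Proposition 2.1]{AW}—so there is nothing substantive to compare against. Your Abel-summation/convex-combination-of-projections argument is in fact the standard proof of this fact (and is essentially what appears in \cite{AW}): once you observe that the hypothesis forces $\lambda_n:=a_n/b_n\in[0,1]$ (after discarding indices with $b_n=0$, where necessarily $a_n=0$), writing $\sum_n\lambda_n b_ne_n$ as a subconvex combination of the projections $P_{A_k}(y)$ and applying $\|P_{A_k}(y)\|\le\mathbf K_s\|y\|$ gives the bound with constant exactly $\mathbf K_s$. Your handling of the typo ($a_0=0$ should read $a_n=0$) and of the sign convention is also correct.
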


\begin{proof}
See \cite[Proposition 2.1]{AW}.
\end{proof}

\begin{proof}[Proof of Theorem \ref{m1}]
Assume that $\mathcal{B}$ is $\mathbf{C}_g^\omega$-$\omega$-greedy. Let $x\in\mathbb{X}$ and $B\in \mathbb{N}^{<\infty}$. Write 
$$y \ =\ \sum_{n\in B}(\alpha + e_n^*(x))e_n + P_{B^c}(x),$$
where $\alpha$ is chosen sufficiently large such that $B$ is a greedy set of $y$. Then
$$\|P_{B^c}(x)\|\ =\ \|y- G_{|B|}(y)\|\ \le\ \mathbf{C}_g^\omega\sigma^\omega_B(y)\ \le\ \mathbf C_g^\omega\|y-\alpha 1_B\|\ =\ \mathbf C_g^\omega \|x\|.$$
Hence, $\mathcal{B}$ is $\mathbf{C}_g^\omega$-suppression unconditional. Next, we prove $\omega$-Property (A). We choose $x, A, B, (\varepsilon), (\delta)$  as in Definition \ref{dA}. Set $y= x+1_{\varepsilon A} + 1_{\delta B}$. Then $B$ is a greedy set of $y$. We have
$$\|x+1_{\varepsilon A}\| \ =\ \|y - G_{|B|}(y)\|\ \le\ \mathbf{C}_g^\omega\sigma^\omega_B(y)\ \le\ \mathbf{C}_g^\omega\|y- P_A(y)\|\ =\ \mathbf{C}_g^\omega\|x+1_{\delta B}\|.$$
This completes the proof.

Now we assume that $\mathcal{B}$ is $\mathbf K_s$-suppression unconditional and satisfies $\mathbf C_b^\omega$-$\omega$-Property (A). Let $x\in\mathbb{X}$ have a greedy set $A\subset\mathbb{N}$. Let $B\in \mathbb{N}^{<\infty}$ with $w(B\backslash A)\le w(A\backslash B)$. Also, choose arbitrary $(b_n)_{n\in B}\subset\mathbb{K}$. Let $\alpha := \min_{n\in A}|e_n^*(x)|$. By Lemma \ref{l1} and Proposition \ref{p1}, we have
\begin{align*}
    \|x-P_A(x)\|&\ \le\ \mathbf C_b^{\omega}\left\|x-P_A(x)-P_{B\backslash A}(x) + \alpha\sum_{n\in A\backslash B}\sgn(e_n^*(x))e_n\right\|\\
    &\ \le\ \mathbf C_b^{\omega}\left\|P_{(A\cup B)^c}(x) + \alpha\sum_{n\in A\backslash B}\sgn(e_n^*(x))e_n\right\|\\
    &\ \le\ \mathbf K_s\mathbf C_b^{\omega}\left\| P_{(A\cup B)^c}(x) + \sum_{n\in B}(e_n^*(x)-b_n)e_n + P_{A\backslash B}(x)\right\|\\
    &\ =\ \mathbf K_s\mathbf C_b^{\omega}\left\|x-\sum_{n\in B}b_ne_n\right\|.
\end{align*}
This completes our proof that $\mathcal{B}$ is $\mathbf K_s\mathbf C_b^{\omega}$-$\omega$-greedy. 
\end{proof}

When we do not need tight estimates, the notion of $\omega$-disjoint (super)democracy can play the role of $\omega$-Property (A), providing other characterizations of $\omega$-greedy bases. 

\begin{defi}\label{dD}\normalfont
A basis $\mathcal{B}$ is $\omega$-disjoint democratic ($\omega$-disjoint superdemocratic, respectively) if there exists $C\ge 1$ such that
$$\|1_A\| \ \le\  C\|1_B\|,\mbox{ }(\|1_{\varepsilon A}\|\ \le\ C\|1_{\delta B}\|,\mbox{ respectively}),$$
for all $A, B\in \mathbb{N}^{<\infty}$ with $w(A) \le w(B)$, $A\cap B = \emptyset$ and signs $(\varepsilon), (\delta)$. The least constant $C$ is denoted by $\mathbf C_{d, \sqcup}^\omega$ (and $\mathbf C^\omega_{sd, \sqcup}$, respectively.)
\end{defi}

\begin{rek}\normalfont
A basis $\mathcal{B}$ is said to be $\omega$-\textit{(super)democratic} if in Definition \ref{dD}, we drop the requirement $A\cap B = \emptyset$; $\mathcal{B}$ is said to be \textit{(super)democratic} if it is $\omega$-(super)democratic for $\omega$ being the cardinality weight, i.e., $w(A) = |A|, \forall A\subset\mathbb{N}$.
\end{rek}

\begin{thm}\label{m4}
Let $\mathcal{B}$ be a basis and $\omega$ be a weight on subsets of $\mathbb{N}$. The following are equivalent:
\begin{enumerate}
    \item $\mathcal{B}$ is $\omega$-greedy,
    \item $\mathcal{B}$ is unconditional and satisfies $\omega$-Property (A), 
    \item $\mathcal{B}$ is unconditional and $\omega$-disjoint superdemocratic,
    \item $\mathcal{B}$ is unconditional and $\omega$-disjoint democratic. 
\end{enumerate}
\end{thm}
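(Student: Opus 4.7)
The plan is to establish the cycle $(1)\Leftrightarrow(2)\Rightarrow(3)\Rightarrow(4)\Rightarrow(2)$. The equivalence $(1)\Leftrightarrow(2)$ is already furnished by Theorem \ref{m1}. For $(2)\Rightarrow(3)$, I will simply set $x=0$ in the inequality defining $\omega$-Property (A), which yields $\|1_{\varepsilon A}\|\le\mathbf{C}_b^\omega\|1_{\delta B}\|$ under exactly the hypotheses of Definition \ref{dD} and therefore witnesses $\omega$-disjoint superdemocracy with $\mathbf{C}_{sd,\sqcup}^\omega\le\mathbf{C}_b^\omega$. For $(3)\Rightarrow(4)$, specialize both sign sequences to the constant sequence $1$.

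The substantive implication is $(4)\Rightarrow(2)$. Assume $\mathcal{B}$ is $\mathbf{K}_s$-suppression unconditional (with lattice constant $\mathbf{K}_u$ provided by the definition of unconditional basis) and $\mathbf{C}_{d,\sqcup}^\omega$-$\omega$-disjoint democratic. Fixing $x,A,B,(\varepsilon),(\delta)$ as in Definition \ref{dA}, the strategy is to sandwich $\|x+1_{\varepsilon A}\|$ between triangle-inequality upper and lower bounds against $\|x+1_{\delta B}\|$. First, Proposition \ref{p1} applied with the ``$a_n=0$'' escape clause on the block $B$ gives
\[
\|x+1_{\varepsilon A}\|\ \le\ \mathbf{K}_s\|x+1_{\varepsilon A}+1_{\delta B}\|\ \le\ \mathbf{K}_s\bigl(\|x+1_{\delta B}\|+\|1_{\varepsilon A}\|\bigr).
\]
Second, the lattice form of unconditionality combined with $\omega$-disjoint democracy on the disjoint pair $(A,B)$ yields
\[
\|1_{\varepsilon A}\|\ \le\ \mathbf{K}_u\|1_A\|\ \le\ \mathbf{K}_u\mathbf{C}_{d,\sqcup}^\omega\|1_B\|\ \le\ \mathbf{K}_u^2\mathbf{C}_{d,\sqcup}^\omega\|1_{\delta B}\|.
\]
Third, suppression unconditionality applied to the projection $P_B$ on $x+1_{\delta B}$ gives $\|1_{\delta B}\|\le\mathbf{K}_s\|x+1_{\delta B}\|$. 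Chaining these three estimates delivers $\omega$-Property (A) with a constant of the form $\mathbf{K}_s\bigl(1+\mathbf{K}_s\mathbf{K}_u^2\mathbf{C}_{d,\sqcup}^\omega\bigr)$, completing the cycle.

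The main (minor) obstacle is the bookkeeping for Proposition \ref{p1}: one must verify, at each invocation, that on every coordinate either the left-hand coefficient vanishes or its sign matches the right and its magnitude is smaller. The first displayed inequality is the only place this is nonobvious, and it is handled precisely by the ``$a_n=0$'' clause on the block $B$. Beyond that, the argument is simply the triangle inequality together with the two explicit unconditionality constants $\mathbf{K}_s,\mathbf{K}_u$ and the disjoint-democracy hypothesis—essentially the same toolkit used to close the loop in Theorem \ref{m1}.
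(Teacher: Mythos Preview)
Your proof is correct and follows essentially the same route as the paper: the cycle $(1)\Leftrightarrow(2)\Rightarrow(3)\Rightarrow(4)\Rightarrow(2)$, with the substantive step $(4)\Rightarrow(2)$ handled by a triangle-inequality split combined with the unconditionality constants $\mathbf K_s,\mathbf K_u$ and the disjoint-democracy hypothesis. The only difference is the initial decomposition: the paper writes $\|x+1_{\varepsilon A}\|\le\|x\|+\|1_{\varepsilon A}\|$ directly and then bounds each piece against $\|x+1_{\delta B}\|$, obtaining the constant $\mathbf K_s+\mathbf K_u^2\mathbf C_{d,\sqcup}^\omega$, whereas you first pass through $\|x+1_{\varepsilon A}+1_{\delta B}\|$ and end up with the slightly larger constant $\mathbf K_s(1+\mathbf K_s\mathbf K_u^2\mathbf C_{d,\sqcup}^\omega)$; the ideas are otherwise identical.
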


\begin{proof}
By Theorem \ref{m1}, we know that (1) $\Longleftrightarrow$ (2). It follows immediately from definitions that
$\omega$-Property (A) $\Longrightarrow$ $\omega$-disjoint superdemocratic $\Longrightarrow$ $\omega$-disjoint democratic. Hence, (2) $\Longrightarrow$ (3) $\Longrightarrow$ (4). It remains to show that (4) $\Longrightarrow$ (2). Let $x, A, B, (\varepsilon), (\delta)$ be chosen as in Definition \ref{dA}. We have
\begin{align*}\|x+1_{\varepsilon A}\|\ \le\ \|x\| + \|1_{\varepsilon A}\|&\ \le\  \|x\| + \mathbf K_u\|1_A\|\\
&\ \le\  \|x\| + \mathbf K_u\mathbf C^{\omega}_{d, \sqcup}\|1_B\|\\
&\ \le\ \mathbf K_s\|x+1_{\delta B}\| + \mathbf K_u^2\mathbf C^{\omega}_{d, \sqcup}\|x+1_{\delta B}\|\\
&\ =\ (\mathbf K_s+\mathbf K_u^2\mathbf C^{\omega}_{d, \sqcup})\|x+1_{\delta B}\|.
\end{align*}
This completes our proof.
\end{proof}

For $\omega$-almost greedy bases, corresponding results hold. We include the proof of the next theorem in the Appendix.
\begin{thm}\label{m2}
Let $\mathcal{B}$ be a basis and $\omega$ be a weight on subsets of $\mathbb{N}$. 
\begin{enumerate}
    \item If $\mathcal{B}$ is $\mathbf C_{al}^\omega$-$\omega$-almost greedy, then $\mathcal{B}$ is $\mathbf C_{al}^\omega$-suppression quasi-greedy and satisfies $\mathbf C_{al}^\omega$-$\omega$-Property (A). 
    \item If $\mathcal{B}$ is $\mathbf C_\ell$-suppression quasi-greedy and satisfies $\mathbf C^{\omega}_b$-$\omega$-Property (A), then $\mathcal{B}$ is $\mathbf C_\ell \mathbf C^{\omega}_b$-$\omega$-almost greedy.
\end{enumerate}
\end{thm}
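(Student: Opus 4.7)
The plan is to mirror the proof of Theorem \ref{m1}, with suppression quasi-greediness playing the role of suppression unconditionality.

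For part (1), suppression quasi-greediness will follow at once: the empty set is a legitimate competitor in the infimum defining $\widetilde{\sigma}^{\omega}_{\Lambda_m(x)}(x)$ because $w(\emptyset) = 0 \le w(\Lambda_m(x))$, so $\widetilde{\sigma}^{\omega}_{\Lambda_m(x)}(x) \le \|x\|$ and hence $\|x - G_m(x)\| \le \mathbf C_{al}^\omega \|x\|$. For $\omega$-Property (A) the construction from Theorem \ref{m1}.(1) transfers verbatim: given data $x, A, B, \varepsilon, \delta$ as in Definition \ref{dA}, set $y := x + 1_{\varepsilon A} + 1_{\delta B}$, so that $B$ is a greedy set of $y$ with $G_{|B|}(y) = 1_{\delta B}$; then $\omega$-almost greediness combined with the bound $\widetilde{\sigma}^{\omega}_B(y) \le \|y - P_A(y)\| = \|x + 1_{\delta B}\|$ (valid because $w(A) \le w(B)$) yields $\|x + 1_{\varepsilon A}\| \le \mathbf C_{al}^\omega \|x + 1_{\delta B}\|$.

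For part (2), fix $x \in \mathbb X$ with greedy set $A$, let $\alpha := \min_{n \in A}|e_n^*(x)|$, let $B \in \mathbb N^{<\infty}$ satisfy $w(B \backslash A) \le w(A \backslash B)$, and set $\varepsilon_n := \sgn(e_n^*(x))$ for $n \in A \backslash B$. The argument will proceed in two steps. First, applying Lemma \ref{l1} to $P_{A^c}(x)/\alpha$ (whose sup-norm is at most $1$), with Lemma \ref{l1}'s ``$A$'' chosen as $B \backslash A$ (inside the support) and ``$B$'' chosen as $A \backslash B$ (disjoint from the support), produces
\[
\|x - P_A(x)\| \ \le \ \mathbf C_b^{\omega} \, \bigl\| P_{(A \cup B)^c}(x) + \alpha 1_{\varepsilon(A \backslash B)} \bigr\|.
\]
Second, a truncation-type consequence of suppression quasi-greediness will give
\[
\bigl\| P_{(A \cup B)^c}(x) + \alpha 1_{\varepsilon(A \backslash B)} \bigr\| \ \le \ \mathbf C_\ell \, \|x - P_B(x)\|,
\]
the point being that the left-hand vector is the truncation at level $\alpha$ of $x - P_B(x) = P_{(A \cup B)^c}(x) + P_{A \backslash B}(x)$ on $A \backslash B$, both vectors admit $A \backslash B$ as a greedy set, and a single application of the suppression-quasi-greedy inequality to an auxiliary vector whose greedy remainder recovers the truncation produces the constant $\mathbf C_\ell$. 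Chaining the two estimates yields $\mathbf C_{al}^\omega \le \mathbf C_\ell \mathbf C_b^{\omega}$.

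The main obstacle will be the truncation step in part (2). Truncation estimates for quasi-greedy bases are standard (compare the arguments underlying \cite{DKKT} or \cite{AW}), but extracting the precise constant $\mathbf C_\ell$ rather than a larger multiple exploits the sharper \emph{suppression} form of quasi-greediness together with a careful choice of auxiliary vector on which the greedy-set structure is preserved. The remainder of the proof is routine bookkeeping parallel to the proof of Theorem \ref{m1}.(2).
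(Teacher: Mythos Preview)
Your proposal is correct and matches the paper's proof: part (1) is verbatim the paper's argument, and part (2) is the same two-step chain (Lemma \ref{l1} followed by a truncation estimate on $x-P_B(x)$). The paper handles the truncation step by invoking the bound $\|T_\alpha\|\le \mathbf C_\ell$ from \cite[Lemma 2.5]{BBG} (stated in the appendix as Theorem \ref{bto}); your phrase ``a single application of the suppression-quasi-greedy inequality to an auxiliary vector'' is slightly optimistic---obtaining the sharp constant $\mathbf C_\ell$ for the truncation operator is a short standalone lemma rather than a one-line consequence---but the overall strategy is exactly the paper's.
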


\begin{thm}\label{m5}
Let $\mathcal{B}$ be a basis and $\omega$ be a weight on subsets of $\mathbb{N}$. The following are equivalent:
\begin{enumerate}
    \item $\mathcal{B}$ is $\omega$-almost greedy,
    \item $\mathcal{B}$ is quasi-greedy and satisfies $\omega$-Property (A), 
    \item $\mathcal{B}$ is quasi-greedy and $\omega$-disjoint superdemocratic,
    \item $\mathcal{B}$ is quasi-greedy and $\omega$-disjoint democratic. 
\end{enumerate}
\end{thm}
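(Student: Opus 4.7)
The plan is to mirror the proof of Theorem~\ref{m4} almost verbatim, with the two uses of unconditional constants ($\mathbf K_s$ and $\mathbf K_u$) replaced by the corresponding quasi-greedy surrogates. Indeed, Theorem~\ref{m2} already hands us $(1)\Longleftrightarrow (2)$, while $(2)\Longrightarrow(3)\Longrightarrow(4)$ are trivial from the definitions (dropping the requirement that the coefficients on $A$ be of modulus $1$, and then dropping the signs $(\varepsilon),(\delta)$). Thus the entire content of the theorem is the implication $(4)\Longrightarrow(2)$: assuming $\mathcal{B}$ is quasi-greedy and $\omega$-disjoint democratic, I want to deduce $\omega$-Property (A).

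To carry this out I will first recall two standard consequences of suppression quasi-greediness that play the role of the unconditional estimates in Theorem~\ref{m4}. First, if $B\cap\supp(x)=\emptyset$ and $\|x\|_\infty\le 1$, then $B$ is a (suppression) greedy set of $x+1_{\delta B}$, so
$$\|x\|\ =\ \|(x+1_{\delta B})-1_{\delta B}\|\ \le\ \mathbf C_\ell\|x+1_{\delta B}\|,$$
and consequently $\|1_{\delta B}\|\le (1+\mathbf C_\ell)\|x+1_{\delta B}\|$ by the triangle inequality. Second, I will quote the standard sign-change lemma for quasi-greedy bases, producing a constant $\kappa=\kappa(\mathbf C_\ell,\mathbb{K})$ with $\|1_{\varepsilon A}\|\le \kappa\|1_A\|$ for every finite $A$ and every sign $(\varepsilon)$.

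Given these two ingredients, the proof of $(4)\Longrightarrow(2)$ is then just the same four-line estimate as in Theorem~\ref{m4}. Picking $x,A,B,(\varepsilon),(\delta)$ as in Definition~\ref{dA},
\begin{align*}
\|x+1_{\varepsilon A}\|
&\ \le\ \|x\|+\|1_{\varepsilon A}\|\\
&\ \le\ \mathbf C_\ell\|x+1_{\delta B}\|+\kappa\|1_A\|\\
&\ \le\ \mathbf C_\ell\|x+1_{\delta B}\|+\kappa\mathbf C^\omega_{d,\sqcup}\|1_B\|\\
&\ \le\ \mathbf C_\ell\|x+1_{\delta B}\|+\kappa^2\mathbf C^\omega_{d,\sqcup}\|1_{\delta B}\|\\
&\ \le\ \bigl(\mathbf C_\ell+\kappa^2(1+\mathbf C_\ell)\mathbf C^\omega_{d,\sqcup}\bigr)\|x+1_{\delta B}\|,
\end{align*}
which is $\omega$-Property (A). A cleaner variant uses $\omega$-disjoint superdemocracy (which is all one actually needs via $(3)\Longrightarrow(2)$): the $\kappa$ factors collapse to a single constant $\mathbf C^\omega_{sd,\sqcup}$ because one controls $\|1_{\varepsilon A}\|$ directly in terms of $\|1_{\delta B}\|$.

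The expected obstacle is merely bookkeeping on constants: one must make sure the two quasi-greedy estimates above are stated with the appropriate version of the suppression constant $\mathbf C_\ell$ (real versus complex scalars affect $\kappa$). No new structural idea is required beyond Theorem~\ref{m2} and the two classical quasi-greedy lemmas, so the proof essentially writes itself once the replacement inequalities are on the table.
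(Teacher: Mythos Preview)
Your proposal is correct and is precisely the argument the paper has in mind: the paper states Theorem~\ref{m5} without proof because it is the direct analogue of Theorem~\ref{m4}, with $(1)\Longleftrightarrow(2)$ coming from Theorem~\ref{m2} and the implication $(4)\Longrightarrow(2)$ obtained by replacing the unconditionality constants $\mathbf K_s,\mathbf K_u$ by the quasi-greedy substitutes you list (the bound $\|x\|\le\mathbf C_\ell\|x+1_{\delta B}\|$ since $B$ is a greedy set, and unconditionality for constant coefficients to change signs). The only cosmetic remark is that your description of $(2)\Longrightarrow(3)$ should read ``setting $x=0$'' rather than ``dropping the requirement that the coefficients on $A$ be of modulus $1$,'' but the mathematics is unaffected.
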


\begin{cor}\label{cc}\begin{enumerate}
    \item A basis $\mathcal{B}$ is unconditional if and only if $\mathcal{B}$ is $\omega$-greedy for some weight $\omega$.
    \item A basis $\mathcal{B}$ is quasi-greedy if and only if $\mathcal{B}$ is $\omega$-almost greedy for some weight $\omega$.
\end{enumerate}
\end{cor}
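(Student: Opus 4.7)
The plan is to derive both statements directly from the characterizations established in Theorems \ref{m4} and \ref{m5}. The forward directions are immediate: if $\mathcal{B}$ is $\omega$-greedy, Theorem \ref{m4} gives unconditionality as a consequence; similarly, if $\mathcal{B}$ is $\omega$-almost greedy, Theorem \ref{m5} yields quasi-greediness. So all the work lies in constructing, for an arbitrary unconditional (respectively quasi-greedy) basis, a specific weight $\omega$ witnessing the stronger property.

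For the reverse directions, the strategy is to cook up a weight that trivializes the $\omega$-disjoint democracy condition, since this is the weakest hypothesis appearing in Theorems \ref{m4}(4) and \ref{m5}(4). The natural choice is
\[
\omega(A) \ :=\ \|1_A\| \ \text{ for nonempty finite } A,\quad \omega(\emptyset)=0,\quad \omega(A)=\infty \ \text{ for infinite } A.
\]
This satisfies Definition \ref{dw} since $\|1_A\|>0$ for every nonempty finite $A$ (by semi-normalization). With this choice, the condition $w(A)\le w(B)$ on disjoint finite sets is literally $\|1_A\|\le \|1_B\|$, so $\omega$-disjoint democracy holds with constant $C=1$.

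Assembling the pieces: for (1), an unconditional $\mathcal{B}$ together with the $\omega$-disjoint democracy just observed satisfies condition (4) of Theorem \ref{m4}, hence $\mathcal{B}$ is $\omega$-greedy. For (2), a quasi-greedy $\mathcal{B}$ together with the same $\omega$-disjoint democracy satisfies condition (4) of Theorem \ref{m5}, hence $\mathcal{B}$ is $\omega$-almost greedy. I do not anticipate a genuine obstacle here; the only subtle point is confirming that the chosen $\omega$ respects Definition \ref{dw} (in particular assigning a strictly positive value to every nonempty set), which follows from the lower bound $c_1>0$ in \eqref{ed1}. Everything else is a direct invocation of the previously proved characterizations.
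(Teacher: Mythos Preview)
Your proposal is correct and follows essentially the same approach as the paper: the forward directions are read off from the characterizations, and for the reverse directions you construct the weight $\omega(A)=\|1_A\|$ (with $\omega(\emptyset)=0$ and $\omega(A)=\infty$ for infinite $A$), which makes $\omega$-disjoint democracy hold tautologically with constant $1$. The only cosmetic difference is that the paper cites Theorem~\ref{m1} for the forward implication in (1) whereas you cite Theorem~\ref{m4}, but since the relevant equivalence in Theorem~\ref{m4} is established via Theorem~\ref{m1}, this is the same argument.
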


\begin{proof}
(1) If $\mathcal{B}$ is $\omega$-greedy for some weight $\omega$, then $\mathcal{B}$ is unconditional by Theorem \ref{m1}. Conversely, suppose that $\mathcal{B}$ is unconditional. Define the weight $\omega$ on a set $A$ 
$$\omega(A)\ =\ \begin{cases}\|1_A\| &\mbox{ if }A\mbox{ is finite,}\\ \infty &\mbox{ if }A\mbox{ is infinite}.\end{cases}$$
By Theorem \ref{m4}, it suffices to show that $\mathcal{B}$ is $\omega$-disjoint democratic. This is clearly true since for two finite sets $A, B$ with $\omega(A)\le \omega(B)$, we get $\|1_A\|\le \|1_B\|$ by the definition of $\omega$. 

The proof of (2) is similar to that of (1). 
\end{proof}

\section{A set-weighted-greedy basis that is not sequence-weighted-greedy}\label{exa}

The following theorem provides a necessary condition for a basis to be $\varsigma$-greedy for some weight sequence $\varsigma$. 

\begin{thm}\label{m0}
If a basis $\mathcal{B} = (e_n)_{n=1}^\infty$ is $\varsigma$-(almost) greedy for some weight sequence $\varsigma$, then either $\mathcal{B}$ is (almost) greedy or there exists a subsequence $(e_{n_k})_{k=1}^\infty$ equivalent to the canonical basis of $c_0$.
\end{thm}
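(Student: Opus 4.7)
My plan is to use the characterizations from Theorems \ref{m5} and \ref{m4}, which specialize to sequential weights as: $\mathcal{B}$ is $\varsigma$-(almost) greedy iff $\mathcal{B}$ is unconditional (respectively quasi-greedy) and $\varsigma$-disjoint democratic. Assuming the former, I split the argument on the behavior of $\varsigma = (s_n)$: the unbounded-above regime and the ``vanishing-infimum'' regime will each yield a $c_0$-equivalent subsequence, while the regime $s_n \in [c,M]$ will let me deduce cardinality-weight disjoint democracy and hence (almost) greediness.

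In the first regime, $\sup_n s_n = \infty$, I would fix an arbitrary finite $A \subset \mathbb{N}$ and pick $n \notin A$ with $s_n > s(A)$. The $\varsigma$-disjoint democratic property gives $\|1_A\| \le C\|e_n\| \le C c_2$, so $\sup_A \|1_A\| < \infty$; the signed-sum upgrade (immediate under unconditionality, valid under quasi-greediness) shows that the entire basis $(e_n)$ is equivalent to the canonical $c_0$ basis, which trivially provides the desired subsequence.

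In the second regime, $\sup_n s_n < \infty$ and $\inf_n s_n = 0$, I would extract a subsequence $(n_k)$ with $s_{n_k} \to 0$ so quickly that $\sum_k s_{n_k} \le s(B)$ for some fixed finite $B \subset \mathbb{N} \setminus \{n_k\}$ of ``large-weight'' indices, namely those $n$ with $s_n \ge \tfrac12 \sup_m s_m$. Such $B$ exists: if the large-weight set is infinite I take a single element missing $\{n_k\}$, otherwise I take the whole (finite) set and pass to a tail of $(n_k)$ to secure disjointness. Then for every finite $F \subset \mathbb{N}$, the set $A_F := \{n_k : k \in F\}$ is disjoint from $B$ with $s(A_F) \le s(B)$, so $\|1_{A_F}\| \le C\|1_B\|$, and the same signed upgrade produces a subsequence $(e_{n_k})$ equivalent to the $c_0$ basis.

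In the third regime, $s_n \in [c,M]$, given disjoint $A, B$ with $|A| \le |B|$ I observe $s(A) \le M|A| \le (M/c)\, s(B)$. Once $|B|$ is large enough that $s(B) \ge M$, each individual $s_n \le M \le s(B)$, so a greedy bin-packing partitions $A = A_1 \sqcup \cdots \sqcup A_J$ with $J \le J_0$ depending only on $M/c$, each satisfying $s(A_j) \le s(B)$; the triangle inequality then gives $\|1_A\| \le J_0 C \|1_B\|$. The finitely many small-$|B|$ exceptions are absorbed into the semi-normalization constants. This shows $\mathcal{B}$ is cardinality-weight disjoint democratic, and applying Theorem \ref{m5} (respectively \ref{m4}) with $\omega(A) = |A|$ gives that $\mathcal{B}$ is (almost) greedy. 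The main obstacle I anticipate is the signed-sum upgrade needed in regimes one and two under the quasi-greedy (non-unconditional) hypothesis: a uniform bound on $\|1_A\|$ does not automatically yield one on $\|1_{\varepsilon A}\|$. I would handle this via the well-known fact that for quasi-greedy bases, $\|1_{\varepsilon A}\| \le K \|1_A\|$ with $K$ depending only on the quasi-greedy constant, a consequence of the boundedness of the truncation operator.
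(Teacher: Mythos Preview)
Your proposal is correct and follows the same three-case split on $\varsigma$ (namely $\sup_n s_n=\infty$, $\inf_n s_n=0$, and $0<\inf_n s_n\le\sup_n s_n<\infty$) that the paper uses. The difference is one of execution rather than strategy: the paper's proof is purely citational, invoking \cite[Theorem 4.1, Propositions 3.5 and 3.10]{BDKOW} and \cite[Theorem 2]{DKOSZ} to dispatch each case via $\varsigma$-Property~(A), whereas you work directly from $\varsigma$-disjoint democracy (obtained from the paper's own Theorems~\ref{m4} and~\ref{m5}) and supply explicit constructions --- the single large-weight comparison set in the first two regimes and the bin-packing partition in the bounded regime. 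Your route is more self-contained and makes the mechanism transparent; the paper's route is shorter because the work has already been done in \cite{BDKOW}. One minor remark: in the $\sup_n s_n=\infty$ regime you conclude only that a $c_0$-subsequence exists, while the paper observes (as you also note in passing) that in fact the whole basis is equivalent to the $c_0$ basis and hence (almost) greedy, landing in the first alternative of the dichotomy rather than the second; both conclusions suffice for the statement as written.
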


Observe that $\varsigma$-Property (A) is $\omega$-Property (A) when the weight $\omega$ on sets is determined by a weight sequence $\varsigma$. Particularly, Property (A) (first introduced in \cite{AW} and later generalized in \cite{DKOSZ}) is $\varsigma$-Property (A) when $\varsigma = (1,1,\ldots)$.

\begin{defi}\normalfont
A basis $\mathcal{B}$ has $\varsigma$-Property (A) if there exists $C\ge 1$ such that 
$$\|x+1_{\varepsilon A}\|\ \le\ C\|x+1_{\delta B}\|,$$
for all $A, B\in \mathbb{N}^{<\infty}$ with $s(A)\le s(B)$, signs $(\varepsilon), (\delta)$, and $x\in\mathbb{X}$ with $\|x\|_\infty\le 1$ and $A\sqcup B\sqcup x$. The least constant $C$ is denoted by $\mathbf C^{\varsigma}_b$.
As a special case, a basis $\mathcal{B}$ is said to have Property (A) if it has $\varsigma$-Property (A) for $\varsigma = (1, 1, \ldots)$.
\end{defi}

\begin{proof}[Proof of Theorem \ref{m0}]
We assume that $\mathcal{B}$ is $\varsigma$-greedy for some weight sequence $\varsigma = (s(n))_{n=1}^\infty$. By \cite[Theorem 4.1]{BDKOW}, $\mathcal{B}$ is unconditional and has $\varsigma$-Property (A). 

If $0 < \inf s(n) \le \sup s(n) < \infty$, then \cite[Proposition 3.5]{BDKOW} implies that $\mathcal{B}$ has Property (A). According to \cite[Theorem 2]{DKOSZ}, we know that $\mathcal{B}$ is greedy. 

If $\sup s(n) = \infty$, then \cite[Proposition 3.10]{BDKOW} states that $\mathcal{B}$ is equivalent to the canonical basis of $c_0$ and thus, is greedy. 

If $\inf s(n) = 0$, then by \cite[Proposition 3.10]{BDKOW}, $(e_n)_{n=1}^\infty$ has a subsequence $(e_{n_k})_{k=1}^\infty$ that is equivalent to the canonical basis of $c_0$. 

The proof of the almost greedy case is similar. 
\end{proof}

We now state the existence of an $\omega$-greedy basis that is not $\varsigma$-almost greedy for any weight sequence $\varsigma$. We can, in particular, require the weight $\omega$ to have a more rigid structure than in Definition \ref{dw}. For conciseness, we let $w_n := w(\{n\})$.

\begin{defi}\label{sw}\normalfont
A structured weight is a nonnegative function $\omega: \mathcal{P}(\mathbb{N})\rightarrow [0,\infty]$ such that
\begin{itemize}
    \item[(a)] $w(\emptyset) = 0$,
    \item[(b)] $w(A) < \infty$ if $|A|<\infty$,
    \item[(c)] $w(A) \in (0,\infty]$ for each nonempty $A\subset\mathbb{N}$,
    \item[(d)] $w(A)\rightarrow 0$ as $\sum_{n\in A}w_n\rightarrow 0$,
    \item[(e)] $w(A)\rightarrow\infty$ as $\sum_{n\in A}w_n\rightarrow\infty$,
    \item[(f)] There exists an arbitrarily large number $N\in\mathbb{N}$ such that there exists an $\varepsilon > 0$ satisfying $w(\{N, n\})-w_{n} > \varepsilon$ for all $n\in \mathbb{N}, n\neq N$. 
\end{itemize}
\end{defi}
Conditions (a), (b), and (c) are almost the same as what we have in Definition \ref{dw}, except that we now require the weight on a finite set to be finite. Conditions (d) and (e) are reasonable. Condition (d) states that the weight on a set approaches $0$ when the sum of weights of its singletons approaches $0$, while (e) states the same condition with $0$ replaced by $\infty$. Throughout this paper, we will specify whether we need structured weights in our results. If we state a result without mentioning structured weights, then the result holds for weights in Definition \ref{dw}. 

\begin{thm}\label{m3}
There exists a basis that is $\omega$-greedy for some structured weight $\omega$ on sets but is not $\varsigma$-almost greedy for any weight sequence $\varsigma$ on positive integers. 
\end{thm}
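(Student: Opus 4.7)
The plan is to exhibit a single concrete example. Let $\mathbb{X} = \ell_1 \oplus_1 \ell_2$ (direct sum with norm $\|(x,y)\| = \|x\|_1 + \|y\|_2$), and let $(e_n)_{n=1}^\infty$ be the interleaved canonical basis: $e_{2k-1}$ is the $k$-th unit vector of $\ell_1$ and $e_{2k}$ is the $k$-th unit vector of $\ell_2$. This is a $1$-unconditional, semi-normalized Schauder basis with $\|e_n\|=1$ for all $n$; writing $A_1 = A\cap(2\mathbb{N}-1)$ and $A_2 = A\cap 2\mathbb{N}$, one has $\|1_A\| = |A_1| + |A_2|^{1/2}$. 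I then take the candidate weight to be $\omega(A) := \|1_A\|$.

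The first step is to show $(e_n)$ is $\omega$-greedy. Because $\omega$ is defined to be $\|1_A\|$, the implication $\omega(A)\le\omega(B)\Rightarrow \|1_A\|\le \|1_B\|$ is tautological, so the basis is $\omega$-disjoint democratic with constant $1$. Combined with unconditionality, Theorem \ref{m4} immediately yields $\omega$-greediness. The second step is to invoke Theorem \ref{m0}: if $(e_n)$ were $\varsigma$-almost greedy for some sequence weight $\varsigma$, then either $(e_n)$ would be almost greedy or it would contain a subsequence equivalent to the canonical $c_0$ basis. The first alternative fails because choosing $|A_1|=|B_2|=n$ with $|A_2|=|B_1|=0$ gives $\|1_A\|=n$ and $\|1_B\|=\sqrt n$, contradicting democracy; quasi-greediness (from unconditionality) then rules out almost greediness by the Dilworth-Kutzarova-Kalton-Temlyakov characterization. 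The second alternative fails because for any infinite subsequence $(e_{n_k})$, the norms $\|\sum_{k=1}^N e_{n_k}\| = |\{k\le N:n_k\text{ odd}\}| + |\{k\le N:n_k\text{ even}\}|^{1/2}$ tend to $\infty$ (at least one of the two sets is infinite), which is incompatible with $c_0$-equivalence.

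The third step is to verify the six axioms of Definition \ref{sw}. Items (a), (b), (c) are immediate from $\omega(A)=\|1_A\|$ and semi-normalization. Since $w_n=1$ for every $n$, one has $\sum_{n\in A}w_n = |A|$, so (d) is vacuously true and (e) reduces to $\omega(A)\to\infty$ as $|A|\to\infty$, which is clear because $\omega(A)\ge \max(|A_1|,|A_2|^{1/2})$. For (f), a parity case-check gives
$$\omega(\{N,n\}) \;=\; \begin{cases} 2 & \text{if $N,n$ both odd or exactly one is odd,}\\ \sqrt 2 & \text{if $N,n$ both even,}\end{cases}$$
so $\omega(\{N,n\}) - w_n \ge \sqrt 2 - 1 > 0$ for every $n\ne N$, and any $N$ works with $\varepsilon = \sqrt 2 - 1$. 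The main point of care is the $c_0$-subsequence argument in step two and the uniform lower bound in (f); everything else reduces to assembling Theorems \ref{m0} and \ref{m4} with standard facts about direct sums of $\ell_p$ spaces.
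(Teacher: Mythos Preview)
Your proof is correct and follows the same architecture as the paper's: exhibit an unconditional, non-democratic basis with no $c_0$ subsequence, define $\omega(A)=\|1_A\|$ (extended by $\infty$ on infinite sets---you should say this explicitly so that condition~(c) is defined for infinite $A$), invoke Theorem~\ref{m4} for $\omega$-greediness, and invoke Theorem~\ref{m0} to rule out $\varsigma$-almost greediness. The paper chooses a more elaborate Lorentz-type example (a completion of $c_{00}$ under a norm built from the sequences $n^{-1/2}$ and $n^{-1}$ on $\{2^k\}$ and its complement), whereas your $\ell_1\oplus_1\ell_2$ example is more elementary and makes the verification of Definition~\ref{sw} completely transparent; in particular, your basis is normalized with $w_n\equiv 1$, so conditions (d)--(f) reduce to one-line checks rather than the implicit computation the paper leaves to the reader.

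One small correction: in your verification of (f) you obtain $\omega(\{N,n\})-w_n\ge \sqrt 2-1$ with equality when $N$ and $n$ are both even, so taking $\varepsilon=\sqrt 2-1$ fails the \emph{strict} inequality required in Definition~\ref{sw}(f); take instead any $\varepsilon<\sqrt 2-1$, e.g.\ $\varepsilon=(\sqrt 2-1)/2$.
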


\begin{proof}
By Theorem \ref{m0}, an unconditional basis that is neither democratic nor has a subsequence equivalent to the canonical basis of $c_0$ is not $\varsigma$-almost greedy on any $\varsigma$. 

Set $P := \{2^{k}: k\ge 1\}$, $a_n = 1/n^{1/2}$ and $b_n = 1/n$ for $n\ge 1$.
Let $\mathbb{X}$ be the completion of $c_{00}$ under the following norm: for $x = (x_1, x_2, \ldots)\in c_{00}$, define 
$$\|x\|\ :=\ \left(\sup_{\sigma}\sum_{i\in P}a_{\sigma(i)}|x_i|\right) + \left(\sup_{\pi}\sum_{i\notin P} b_{\pi(i)}|x_i|\right),$$
where $\sigma: P\rightarrow \mathbb{N}$ and $\pi: \mathbb N\backslash P\rightarrow \mathbb{N}$ are bijections. Let $\mathcal{B} = (e_n)_{n=1}^\infty$
be the canonical basis. Clearly, $\mathcal{B}$ is unconditional and normalized. However, $\mathcal{B}$
is not democratic. Indeed, fix $N\in\mathbb{N}$ and set $A = \{3^1, 3^2, \ldots, 3^{N}\}$, $B = \{2^1, 2^2, \ldots, 2^{N}\}$. We have
$$\|1_A\|\ =\ \sum_{n=1}^N \frac{1}{n} \ \sim\ \ln (N)\mbox{ and }\|1_B\|\ =\ \sum_{n=1}^N\frac{1}{\sqrt{n}} \ \sim\ \sqrt{N}.$$
Since $\|1_B\|/\|1_A\| \sim \sqrt{N}/\ln(N)\rightarrow \infty$ as $N\rightarrow\infty$, we know that $\mathcal{B}$ is not democratic and thus, not almost greedy. 

By the proof of Corollary \ref{cc}, $\mathcal{B}$ is $\omega$-greedy for the following weight $\omega$
$$\omega(A)\ =\ \begin{cases}\|1_A\| &\mbox{ if }A\mbox{ is finite,}\\ \infty &\mbox{ if }A\mbox{ is infinite}.\end{cases}$$
It is easy to check that $\omega$ is a structured weight.

We claim that there is no subsequence of $\mathcal{B} = (e_n)_{n=1}^\infty$ that is equivalent to the canonical basis of $c_0$.
Indeed, pick any subsequence $(e_{n_k})_{k=1}^\infty$ of $\mathcal{B}$. For $N\in \mathbb{N}$, we have 
$$\left\|\sum_{k=1}^N e_{n_k}\right\|\ \ge\ \sum_{n=1}^N\frac{1}{n}\ \sim\ \ln (N).$$
Hence, $(e_{n_k})_{k=1}^\infty$ is not equivalent to the canonical basis of $c_0$.
Therefore, Theorem \ref{m0} and the fact that $\mathcal{B}$ is not almost greedy tell us that $\mathcal{B}$ is not $\varsigma$-almost greedy for any weight sequence $\varsigma$.
\end{proof}

\section{$\omega$-semi-greedy bases}\label{sc}

First, we define the $\omega$-version of the classical semi-greedy bases (first introduced in \cite{DKK}). Corresponding to each greedy set $\Lambda_m(x)$, there is a so-called \textit{Chebyshev greedy sum} of order $m$, denoted by $CG_{m}(x)$, such that
\begin{enumerate}
    \item $\supp(CG_{m}(x))\subset\Lambda_m(x)$ and 
    \item we have $$\|x-CG_{m}(x)\|\ =\ \min \left\{\left\|x-\sum_{n\in\Lambda_m(x)}a_ne_n\right\|\, :\, (a_n)\subset\mathbb{K}\right\}.$$
\end{enumerate}

\begin{defi}\label{dsg}\normalfont
A basis $\mathcal{B}$ is $\omega$-semi-greedy if there exists a constant $C\ge 1$ such that for all $x\in \mathbb{X}$, $m\in\mathbb{N}$, and $\Lambda_m(x)$,
$$\|x-CG_m(x)\|\ \le\ C\sigma^{\omega}_{\Lambda_m(x)}(x).$$
The least constant $C$ is denoted by $\mathbf C^\omega_s$.
\end{defi}

The main goal of this section is to establish the following theorem, which, by Theorem \ref{m3}, is a nontrivial extension of \cite[Theorem 1.10]{B1}.

\begin{thm}\label{m8}
Let $\omega$ be a structured weight. Then $\mathcal{B}$ is $\omega$-semi-greedy if and only if it is $\omega$-almost greedy. 
\end{thm}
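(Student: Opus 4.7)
The plan is to prove both implications, with $\omega$-semi-greedy $\Longrightarrow$ $\omega$-almost greedy being the harder direction where the structured-weight hypothesis enters in an essential way.

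For the easier direction $\omega$-almost greedy $\Longrightarrow$ $\omega$-semi-greedy, Chebyshev optimality of $CG_m(x)$ over $\mathrm{span}\{e_n:n\in\Lambda_m(x)\}$ immediately gives $\|x-CG_m(x)\|\le\|x-G_m(x)\|\le\mathbf{C}_{al}^\omega\widetilde{\sigma}^\omega_{\Lambda_m(x)}(x)$. The remaining step is an $\omega$-analogue of the classical quasi-greedy lemma: for a quasi-greedy basis, $\widetilde{\sigma}^\omega_B(x)\le C\sigma^\omega_B(x)$ for all $x$ and $B$. Given a near-minimizer $z=\sum_{n\in A}a_ne_n$ of $\sigma^\omega_B(x)$, setting $u=x-z$ and using the identity $x-P_A(x)=u-P_A(u)$ reduces the claim to showing $\|u-P_A(u)\|\le C\|u\|$, which follows from a truncation-operator argument using the quasi-greediness supplied by Theorem \ref{m5}.

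For the harder direction, Theorem \ref{m5} reduces the problem to proving that $\omega$-semi-greedy implies both quasi-greedy and $\omega$-disjoint democratic. The main obstacle is quasi-greediness. For any $x$ with greedy set $\Lambda_m(x)$, choosing $A=\emptyset$ in the definition of $\sigma^\omega_{\Lambda_m(x)}$ yields $\|x-CG_m(x)\|\le\mathbf{C}_s^\omega\|x\|$, so by the triangle inequality the task is to control $\|G_m(x)-CG_m(x)\|=\|P_{\Lambda_m(x)}(x-CG_m(x))\|$. Here the structured-weight condition (f) of Definition \ref{sw} becomes indispensable: it furnishes arbitrarily large indices $N$ and a uniform $\varepsilon>0$ with $w(\{N,n\})-w_n>\varepsilon$ for all $n\neq N$. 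The plan is to exploit this weight separation by perturbing $x$---adding a suitable multiple of $e_N$ for some $N\notin\mathrm{supp}(x)\cup\Lambda_m(x)$---so as to produce a vector whose Chebyshev best approximation is forced to nearly coincide with the natural projection, thereby transferring the semi-greedy bound into a bound on $\|G_m(x)-CG_m(x)\|$ proportional to $\|x\|$.

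Once quasi-greediness is established, $\omega$-disjoint democracy follows by a standard test-vector argument. Given disjoint finite $A,B$ with $w(A)\le w(B)$ and signs $(\varepsilon),(\delta)$, apply $\omega$-semi-greediness to $x=\alpha 1_{\delta B}+1_{\varepsilon A}$ (with $\alpha>1$, so that $B$ is a greedy set of order $|B|$) using the admissible approximator $1_{\varepsilon A}$ (since $w(A\setminus B)\le w(B\setminus A)$); this yields $\|x-CG_{|B|}(x)\|\le\alpha\mathbf{C}_s^\omega\|1_{\delta B}\|$. Combining with a lower bound $\|x-CG_{|B|}(x)\|\ge c\|1_{\varepsilon A}\|$, obtained via quasi-greediness in the form of suppression unconditionality for constant coefficients (after, if necessary, truncating the $B$-supported part of $x-CG_{|B|}(x)$), produces $\|1_{\varepsilon A}\|\le C\|1_{\delta B}\|$. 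The substitution of the ``$\inf s_n>0$'' hypothesis implicit in the sequential-weight proof of \cite[Theorem~1.10]{B1} by the structured-weight condition (f) is precisely what allows the quasi-greedy step to go through for general $\omega$.
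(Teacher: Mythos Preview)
Your ``easier'' direction contains a genuine error. You claim that for a quasi-greedy basis one has $\widetilde{\sigma}^\omega_B(x)\le C\sigma^\omega_B(x)$, and you reduce this to the estimate $\|u-P_A(u)\|\le C\|u\|$ for an \emph{arbitrary} finite set $A$. But that estimate is precisely suppression unconditionality, not quasi-greediness; no truncation argument will produce it. Worse, the inequality $\widetilde{\sigma}^\omega_B\le C\sigma^\omega_B$ is false already for the cardinality weight: combined with the almost-greedy inequality it would give $\|x-G_m(x)\|\le C\sigma_m(x)$, i.e., every almost greedy basis would be greedy, contradicting the existence of conditional almost greedy bases. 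The paper avoids this trap: it passes through Theorem~\ref{m5} to obtain quasi-greediness \emph{and} $\omega$-disjoint superdemocracy, and then (Theorem~\ref{h1}) builds an explicit approximant $w$ supported in $\Lambda_m(x)$ by truncating $x-z$ at level $\alpha=\max_{n\notin\Lambda_m(x)}|e_n^*(x)|$ and using superdemocracy to control the residual term on $A\setminus\Lambda_m(x)$. Superdemocracy is not optional here.

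Your ``harder'' direction has the right global shape (semi-greedy $\Rightarrow$ quasi-greedy $+$ $\omega$-democratic $\Rightarrow$ almost greedy via Theorem~\ref{m5}), but the mechanism you sketch for quasi-greediness is not the paper's and is not clearly viable. You propose to bound $\|G_m(x)-CG_m(x)\|$ by perturbing $x$ with a single $e_N$ coming from condition~(f); it is not evident how a one-point perturbation forces the Chebyshev sum to align with the greedy projection. The paper's argument (Theorem~\ref{m10}, via Proposition~\ref{p42}) is structurally different: it first disposes of the degenerate cases $\sup_n w_n=\infty$ and $\sum_n w_n<\infty$ (where $\mathcal{B}$ is equivalent to the $c_0$ basis), and for the remaining case runs a dichotomy on whether $w(\Lambda_m(x))\le\limsup_n w_n$. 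In the small-weight case condition~(f) is used to bound $\|1_{\varepsilon\Lambda_m(x)}\|$ uniformly; in the large-weight case conditions~(b) and~(e) are used to manufacture an auxiliary interval $E$ with $E>\supp(x)$ and $w(E)\approx w(\Lambda_m(x))$, against which semi-greediness is applied twice. Your outline invokes only condition~(f) and does not account for the roles of~(d),~(e), or the $c_0$ reductions.
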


\begin{prop}\label{p42}
Let $\mathcal{B}$ be a $\mathbf C^{\omega}_s$-$\omega$-semi-greedy basis, where $\omega$ is structured. 
\begin{enumerate}
    \item Let $B\in\mathbb{N}^{<\infty}$ and $w(B) \le \limsup\limits_{n\rightarrow\infty} w_n$. Then we have
    $$\sup_{(\varepsilon)}\|1_{\varepsilon B}\|\ \le\ 2\mathbf K_b\mathbf C^{\omega}_sc_2,$$
    where $c_2$ is in \eqref{ed1}.
    \item If $\sup_n w_n = \infty$ or $\sum_n w_n< \infty$, then $\mathcal{B}$ is equivalent to the canonical basis of $c_0$.
    \item If $\inf_n w_n = 0$, then $\mathcal{B}$ contains a subsequence equivalent to the canonical basis of $c_0$.
\end{enumerate}
\end{prop}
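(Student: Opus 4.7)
The plan is to exploit $\omega$-semi-greediness via test vectors whose Chebyshev error, after a single partial-sum operator, isolates $1_{\varepsilon B}$.

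For part (1), given finite $B$ with $w(B) \le \limsup_n w_n$, I would pick $N > \max B$ with $N \notin B$ and $w_N \ge w(B)$; such $N$ exists because $\limsup_n w_n \ge w(B)$, and the boundary case where the $\limsup$ equals $w(B)$ without being attained can be handled by a perturbation argument. Set $x := 1_{\varepsilon B} + 2 e_N$, so that $\{N\}$ is a valid greedy set of order $1$ of $x$. The Chebyshev sum satisfies $CG_1(x) = c\, e_N$ for some scalar $c$, so $x - CG_1(x) = 1_{\varepsilon B} + (2-c)e_N$; since $N > \max B$ we obtain
\[
\|1_{\varepsilon B}\| \,=\, \|S_{\max B}(x - CG_1(x))\| \,\le\, \mathbf K_b\,\|x - CG_1(x)\|.
\]
The $\omega$-semi-greedy inequality then yields $\|x - CG_1(x)\| \le \mathbf C_s^{\omega}\, \sigma_{\{N\}}^{\omega}(x)$, and the admissible competitor $A := B$ (valid since $w(B) \le w_N$) with coefficients matching $x$ on $B$ gives $\sigma_{\{N\}}^{\omega}(x) \le \|2 e_N\| \le 2 c_2$. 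Chaining these inequalities produces the stated bound.

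For part (2), the case $\sup_n w_n = \infty$ is immediate: $\limsup_n w_n = \infty$, so $w(B) \le \limsup_n w_n$ for every finite $B$, and part (1) gives a uniform bound on $\sup_\varepsilon \|1_{\varepsilon B}\|$. Using that $\omega$-semi-greedy bases are quasi-greedy (the classical argument transfers verbatim), the quasi-greedy truncation estimate $\|\sum a_n e_n\| \lesssim \max_n|a_n|\cdot \sup_\varepsilon \|1_{\varepsilon B}\|$ (for $B$ the support) combined with the uniform bound forces $\mathcal{B}$ to be equivalent to the canonical basis of $c_0$. The case $\sum_n w_n < \infty$ forces $w_n \to 0$ and thus $\limsup_n w_n = 0$, so part (1) is unavailable; here I would run a variant of the above argument with the singleton $\{N\}$ replaced by a large finite disjoint set $A$, placed past $\max B$, with $w(A) \ge w(B)$ (obtained by aggregating enough indices and controlled via structured property (d)), yielding a comparison $\|1_{\varepsilon B}\| \le C\,\|1_{\delta A}\|$ that, together with quasi-greedy truncation applied to both sides, closes the loop to a uniform bound and hence $c_0$-equivalence.

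For part (3), pick a subsequence $(n_k)$ with $w_{n_k} \to 0$; by (d), finite $B \subset \{n_k : k \ge k_0\}$ have $w(B)$ arbitrarily small. If $\limsup_n w_n > 0$, part (1) applies to all such $B$ for $k_0$ sufficiently large, uniformly bounding $\sup_\varepsilon \|1_{\varepsilon B}\|$; the quasi-greedy truncation argument restricted to the subsequence then yields $(e_{n_k})_{k \ge k_0} \sim c_0$, and absorbing the finitely many initial terms (whose contribution is controlled by $c_1, c_2$) extends this to the full subsequence. If $\limsup_n w_n = 0$, apply the $\sum_n w_n < \infty$-style argument from part (2) restricted to the subsequence. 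The main obstacle will be precisely this $\limsup_n w_n = 0$ regime appearing in the second half of part (2) and (when $w_n \to 0$) in part (3): here part (1) is powerless, and turning the comparison $\|1_{\varepsilon B}\| \le C\,\|1_{\delta A}\|$ into a genuinely uniform bound on $\sup_\varepsilon \|1_{\varepsilon B}\|$ will require a careful interplay between the structured properties (d), (e), (f) and the quasi-greedy truncation estimate.
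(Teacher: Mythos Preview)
Your outline has two genuine gaps that the paper's argument is specifically designed to avoid.

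\textbf{Part (1), boundary case.} When $w(B)=\limsup_n w_n$ but no $w_n$ reaches this value (e.g.\ $w_n=1-1/n$, $w(B)=1$), there is \emph{no} index $N$ with $w_N\ge w(B)$, and no ``perturbation'' fixes this: $w(B)$ is a fixed number, and admissibility of the competitor $A=B$ in $\sigma^\omega_{\{N\}}$ requires exactly $w(B)\le w_N$. The paper's proof does not use a single index; it invokes condition~(f) of a structured weight to pick $N_1>\max B$ with $w(\{N_1,n\})\ge w_n+\varepsilon$ for all $n$, then chooses $N_2>N_1$ with $w(\{N_1,N_2\})>w(B)$, and runs your argument with the \emph{two-point} greedy set $\{N_1,N_2\}$. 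This is precisely why condition~(f) is part of the definition of ``structured'', and you never use it.

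\textbf{Part (2), case $\sum_n w_n<\infty$.} Your plan is to find a finite $A>\max B$ with $w(A)\ge w(B)$, but this can simply fail: for a sequential weight $\omega(A)=\sum_{n\in A}w_n$ one has $w(A)\le\sum_{n>\max B}w_n$, which may be smaller than $w(B)$. Condition~(d) gives $w(A)\to 0$ when $\sum_{n\in A}w_n\to 0$, which is the wrong direction, and condition~(e) is vacuous here. The paper's argument goes the other way: use~(d) to choose $N$ so that every $E\subset\{N+1,N+2,\dots\}$ has $w(E)<w_1$; split $B=B_1\cup B_2$ with $B_1=B\cap[1,N]$ (trivially $\|1_{\varepsilon B_1}\|\le Nc_2$) and $B_2=B\cap(N,\infty)$; then compare $1_{\varepsilon B_2}$ to the \emph{fixed} vector $e_1$ via $x=e_1+1_{\varepsilon B_2}$ with greedy set $\{1\}$ and competitor $B_2$. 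This yields a uniform bound directly, with no ``closing the loop''.

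A further issue: you repeatedly invoke ``$\omega$-semi-greedy $\Rightarrow$ quasi-greedy'' as a black box. In this paper that implication is Theorem~\ref{m10}, whose proof \emph{uses} Proposition~\ref{p42}, so the appeal is circular. Fortunately quasi-greediness is not needed: once $\sup_{(\varepsilon)}\|1_{\varepsilon B}\|\le M$ for all finite $B$, norm convexity gives $\|x\|\le M\|x\|_\infty$ for finitely supported $x$, and together with $\|x\|_\infty\le c_2^*\|x\|$ this already forces equivalence to the $c_0$ basis.
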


\begin{rek}\label{r1}\normalfont
The conclusions in Proposition \ref{p42} still hold if our basis $\mathcal{B}$ is $\mathbf C^{\omega}_{sd, \sqcup}$-disjoint superdemocratic. The proof is left for interested readers. 
\end{rek}

\begin{proof}
(1) Pick $B\in\mathbb{N}^{<\infty}$ and $(\varepsilon)$. Choose $N_1 > \max B$ be the number in condition (f) of a structured weight such that there exists $\varepsilon > 0$ satisfying $w(\{N_1, n\})> w_n+\varepsilon$ for all $n\neq N_1$. It follows that
$$\limsup\limits_{n\rightarrow\infty} w(\{N_1, n\}) \ \ge\ \limsup \limits_{n\rightarrow\infty} w_n + \varepsilon\ \ge\ w(B) + \varepsilon.$$
Pick $N_2 > N_1$ such that $w(\{N_1, N_2\}) > w(B)$. (This is possible due to condition (b).) Set $x:= 1_{\varepsilon B} + e_{N_1} + e_{N_2}$. Then $\{N_1, N_2\}$ is a greedy set of $x$. Let $\|x-CG_{2}(x)\| = \|1_{\varepsilon B} + \alpha_1 e_{N_1} + \alpha_2 e_{N_2}\|$ for some $\alpha_1, \alpha_2\in \mathbb{K}$. We have
\begin{align*}
    \|1_{\varepsilon B}\|&\ \le\ \mathbf K_b\left\|1_{\varepsilon B} + \alpha_1 e_{N_1} + \alpha_2 e_{N_2} \right\|\ \le\ \mathbf K_b\mathbf C^{\omega}_s\sigma^{\omega}_{\{N_1, N_2\}}(x)\\
    &\ \le\ \mathbf K_b\mathbf C^{\omega}_s\|e_{N_1} + e_{N_2}\|\ \le\ 2\mathbf K_b\mathbf C^{\omega}_sc_2.
\end{align*}

(2) If $\sup_n w_n = \infty$, then by (1), $\sup_{(\varepsilon)}\|1_{\varepsilon B}\| \le 2\mathbf K_b\mathbf C^{\omega}_sc_2, \forall B\in\mathbb{N}^{<\infty}$. Hence, the basis is equivalent to the canonical basis of $c_0$. If $\sum_{n} w_n <\infty$, then choose $N\in\mathbb{N}$ such that $\sum_{n=N+1}^\infty w_n$ is so small that $w(E) < w_1$ for all $E\subset \mathbb{N}_{\ge N+1}$. This can be done due to condition (d) of $\omega$. We claim that for any $B\in \mathbb{N}^{<\infty}$ and any sign $(\varepsilon)$, we have
$\|1_{\varepsilon B}\| = O(1)$.
Let $B_1 = B\cap [1, N]$ and $B_2 = B\cap [N+1,\infty)$.
Observe that
\begin{align*}
    \|1_{\varepsilon B}\|\ \le\ \|1_{\varepsilon B_1}\| + \|1_{\varepsilon B_2}\|\  \le\ Nc_2 + \|1_{\varepsilon B_2}\|.
\end{align*}
Set $x:= e_1 + 1_{\varepsilon B_2}$. Then $\{1\}$ is a greedy set of $x$. Let $\|x-CG_{1}(x)\| = \|\alpha e_1 + 1_{\varepsilon B_2}\|$ for some $\alpha\in\mathbb{K}$. Since $w(B_2)< w_1$, we have
\begin{align*}
    \|1_{\varepsilon B_2}\|&\ \le\ (\mathbf K_b+1)\left\|\alpha e_1 + 1_{\varepsilon B_2}\right\|\ \le\ (\mathbf K_b+1)\mathbf C^{\omega}_s\sigma^{\omega}_{\{1\}}(x)\\
    &\ \le\ (\mathbf K_b+1)\mathbf C^{\omega}_s\|e_1\|\ \le\ (\mathbf K_b+1)\mathbf C^{\omega}_sc_2.
\end{align*}
This completes our proof that $\|1_{\varepsilon B}\| = O(1)$ and so, $\mathcal{B}$ is equivalent to the canonical basis of $c_0$.

(3) Choose a subsequence $(n_k)_{k=1}^\infty$ such that $\sum_{k=1}^\infty w_{n_k} < \infty$ and apply (2).
\end{proof}

\begin{thm}\label{m10}Let $\omega$ be a structured weight. 
If a basis $\mathcal{B}$ is $\omega$-semi-greedy, then it is quasi-greedy and $\omega$-superdemocratic. 
\end{thm}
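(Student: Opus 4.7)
The plan is to establish quasi-greediness and $\omega$-superdemocracy in two successive stages, exploiting the structured-weight axioms throughout.

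For quasi-greediness, I would fix $x \in \mathbb{X}$ with a greedy set $A$ of order $m$ and apply the $\omega$-semi-greedy bound with the trivial approximation $D = \emptyset$ (admissible because $w(\emptyset) = 0 \le w(A)$) to obtain
$$\|x - CG_m(x)\|\ \le\ \mathbf C^\omega_s \|x\|.$$
The residual $CG_m(x) - G_m(x)$ is supported on $A$ and has coefficients bounded pointwise by $c_2^* \mathbf C^\omega_s \|x\|$ via the biorthogonal functionals. To pass from these coefficient bounds to a genuine norm bound, I would invoke the case analysis of Proposition \ref{p42}: when $\sup_n w_n = \infty$ or $\sum_n w_n < \infty$, Proposition \ref{p42}(2) gives $\mathcal{B}$ equivalent to the canonical basis of $c_0$ and quasi-greediness is immediate; in the remaining nondegenerate regime, condition (f) of the structured weight supplies arbitrarily large anchor indices $N_1, N_2$ with $w(\{N_1, N_2\}) > w(A)$, so that applying the $\omega$-semi-greedy inequality to $x + t(e_{N_1} + e_{N_2})$ (whose greedy set of order $2$ is $\{N_1, N_2\}$ for large $t$) lets me extract the missing norm bound through Schauder partial sums.

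For $\omega$-superdemocracy, I would take finite $A, B \subset \mathbb{N}$ with $w(A) \le w(B)$ and arbitrary signs $(\varepsilon), (\delta)$, first handling the disjoint case $A \cap B = \emptyset$. Setting $z = 1_{\varepsilon A} + (1 + \mu) 1_{\delta B}$ for small $\mu > 0$, the set $B$ becomes a greedy set of $z$ of order $|B|$, and $\omega$-semi-greedy with $D = A$ (admissible by $w(A \setminus B) = w(A) \le w(B) = w(B \setminus A)$) gives
$$\|z - CG_{|B|}(z)\|\ \le\ \mathbf C^\omega_s (1 + \mu) \|1_{\delta B}\|.$$
Since $z - CG_{|B|}(z) = 1_{\varepsilon A} + v$ with $v$ supported on $B$, the $A$-part can be extracted via Schauder partial sums in the well-ordered case $\max A < \min B$; in the general disjoint case one combines with the quasi-greediness established above and the standard comparability of signed constant vectors for quasi-greedy bases. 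Letting $\mu \downarrow 0$ yields $\|1_{\varepsilon A}\| \le K \|1_{\delta B}\|$. To upgrade to intersecting sets, I split $A = (A \cap B) \sqcup (A \setminus B)$: the intersection part is controlled by noting that $P_{A \cap B}(1_{\delta B}) = 1_{\delta (A \cap B)}$ is a greedy-sum projection, hence bounded by $\|1_{\delta B}\|$ under quasi-greediness, while the disjoint part is handled by the previous case.

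The main obstacle is the quasi-greedy step. Without a priori unconditionality, translating the pointwise coefficient bounds on $CG_m(x) - G_m(x)$ into a bona fide norm bound is delicate, and requires the structured-weight condition (f) in a nontrivial way via two-element greedy-set probes analogous to those used in Proposition \ref{p42}. The remaining pieces for superdemocracy then fall out by tracking constants through Schauder partial sums, biorthogonal functionals, and the $\omega$-semi-greedy inequality, together with standard quasi-greedy facts about signed constant coefficient vectors.
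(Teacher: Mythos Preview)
Your quasi-greedy argument contains a genuine error. You claim that in the nondegenerate regime ($\sum_n w_n = \infty$, $\sup_n w_n < \infty$) condition (f) supplies indices $N_1, N_2$ with $w(\{N_1,N_2\}) > w(A)$ for an arbitrary greedy set $A$. This is false: take the cardinality weight $w(A) = |A|$, which satisfies all the structured-weight axioms (with $w_n \equiv 1$, so (f) holds with $\varepsilon = 1$). Then every two-element set has weight $2$, and you cannot beat $w(A) = m$ once $m \ge 3$. Condition (f) only guarantees $w(\{N,n\}) > w_n + \varepsilon$, which compares a two-element weight to a \emph{singleton} weight; it says nothing about two-element weights being unbounded. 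The two-element probe of Proposition \ref{p42}(1) works precisely because there one assumes $w(B) \le \limsup_n w_n$, and (f) lifts you $\varepsilon$ above that threshold---not above an arbitrary weight. Even granting the (false) inequality, your sketch of how the probe $x + t(e_{N_1}+e_{N_2})$ would deliver a bound on $\|CG_m(x) - G_m(x)\|$ is not a recognizable argument: the semi-greedy bound on $\|v - CG_2(v)\|$ involves a $t$-dependent right-hand side, and there is no evident way to remove it.

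The paper's route is different and this is where the structured-weight axiom (e) does the real work. In the nondegenerate regime one builds, to the \emph{right} of $\supp(x)$, a set $E$ (plus one extra point) with $w(E) \le w(\Lambda_m(x)) < w(E \cup \{\text{pt}\})$; this is possible because $\sum_n w_n = \infty$ forces $w$ of long tails to go to $\infty$ by (e). One then applies semi-greediness twice---once with $E\cup\{\text{pt}\}$ as the greedy set, once with $\Lambda_m(x)$ as the greedy set---and because the auxiliary set sits to the right of $\supp(x)$, the Schauder partial sums peel off exactly what is needed. Your superdemocracy argument has a parallel weakness: in the general disjoint case you need to extract $1_{\varepsilon A}$ from $1_{\varepsilon A} + v$ with $\supp(v) \subset B$, but $v$ has uncontrolled coefficients so $A$ is not a greedy set of this vector, and partial sums do not help unless $A < B$. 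The paper again resolves this by routing through an auxiliary $E$ with $A \cup B < E$, so that partial sums apply.
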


\begin{proof}
Suppose that $\sum_{n=1}^\infty w_n < \infty$ or $\sup_n w_n = \infty$. By Proposition \ref{p42}, we know that $\mathcal{B}$ is equivalent to the canonical basis of $c_0$, and the desired conclusion follows trivially. 
For the rest of the proof, let us assume that $\sum_{n=1}^\infty w_n = \infty$ and $\sup_n w_n < \infty$.

\underline{Quasi-greedy}: Let $x\in \mathbb{X}$ with $\|x\|_\infty \le 1$, $|\supp(x)| < \infty$, and a greedy set $\Lambda_m(x)$.

Case 1: $w(\Lambda_m(x)) \le \limsup\limits_{n\rightarrow\infty} w_n$. By Proposition \ref{p42}, we have
$$\sup_{(\varepsilon)}\|1_{\varepsilon \Lambda_m(x)}\|\ \le\ 2\mathbf K_b\mathbf C^{\omega}_sc_2.$$
By norm convexity,
\begin{align*}\|P_{\Lambda_m(x)}(x)\|&\ \le\ \max_{n}|e_n^*(x)|\sup_{(\varepsilon)}\|1_{\varepsilon \Lambda_m(x)}\|\\
&\ \le\ \sup_{n}\|e_n^*\|\|x\|\cdot 2\mathbf K_b\mathbf C^{\omega}_sc_2\ \le\ 2\mathbf K_b\mathbf C^{\omega}_s c_2c_2^*\|x\|,\end{align*}
where $c_2$ and $c_2^*$ are in \eqref{ed1} and \eqref{ed2}, respectively.

Case 2: $w(\Lambda_m(x)) > \limsup \limits_{n\rightarrow\infty} w_n$. We build a finite set $E$ as follows: choose $N > \max\supp(x)$ such that $w_{N} \le w(\Lambda_m(x))$. Let $k$ be the smallest positive integer verifying
$$w(\{N, N+1, \ldots, N+k\})\ \le\ w(\Lambda_m(x)) < w(\{N, N+1, \ldots, N+k, N+k+1\}).$$
We know such $k$ exists due to $\sum_{n}w_n =\infty$ and condition (e) of a structured weight. 
Let $A = \{N, N+1, \ldots, N + k\}$ and $B = A\cup \{N+k+1\}$. Define 
$$y\ :=\ x-P_{\Lambda_m(x)}(x) + \alpha 1_B,$$
where $\alpha := \min_{n\in \Lambda_m(x)}|e_n^*(x)|$. 
Since $B$ is a greedy set of $y$, by $\mathbf C^{\omega}_s$-$\omega$-semi-greediness, there exist $(b_n)_{n\in B}\subset\mathbb{K}$ such that
\begin{align}\label{e40}
    \|x-P_{\Lambda_m(x)}(x)\|&\ \le\ \mathbf K_b\left\|x-P_{\Lambda_m(x)}(x) + \sum_{n\in B}b_n e_n\right\|\ \le\ \mathbf K_b\mathbf C^{\omega}_s\sigma^\omega_{B}(y)\nonumber\\
    &\ \le\ \mathbf K_b\mathbf C^{\omega}_s\|x+\alpha 1_B\|\ \le\ \mathbf K_b\mathbf C^{\omega}_s(\|x\|+\alpha \|1_A\| + \alpha\|e_{N+k+1}\|).
\end{align}
Pick $j\in\Lambda_m(x)$. We have 
\begin{equation}\label{e41}\alpha \|e_{N+k+1}\|\ \le\ \alpha c_2\ \le\ c_2|e_j^*(x)|\ \le\ c_2\|e_j^*\|\|x\|\ \le\ c_2c_2^*\|x\|.\end{equation}
It remains to bound $\alpha \|1_A\|$. Let $z:= x + \alpha 1_A$. Since $\Lambda_m(x)$ is a greedy set of $z$, $\mathbf C^{\omega}_s$-$\omega$-semi-greediness gives $(t_n)_{n\in \Lambda_m(x)}\subset \mathbb{K}$ such that 
\begin{align}\label{e42}
    \|\alpha 1_A\| &\ \le\ (\mathbf K_b+1)\left\|\sum_{n\in \Lambda_m(x)}t_n e_n + P_{\Lambda_m(x)^c}(x)  +\alpha 1_A\right\|\nonumber\\
    &\ \le\ (\mathbf K_b+1)\mathbf C^{\omega}_s\sigma^{\omega}_{\Lambda_m(x)}(z)\ \le\ (\mathbf K_b+1)\mathbf C^{\omega}_s\|x\|.
\end{align}
From \eqref{e40}, \eqref{e41}, and \eqref{e42}, we have shown that 
$$\|x-P_{\Lambda_m(x)}(x)\|\ =\ O(\|x\|).$$
This completes our proof that $\mathcal{B}$ is quasi-greedy.

\underline{$\omega$-superdemocratic}: Let $A, B\in \mathbb{N}^{<\infty}$ with $w(A)\le w(B)$. Pick signs $(\varepsilon), (\delta)$. 

Case 1: $w(A)\le w(B) \le \limsup\limits_{n\rightarrow\infty} w_n$. By Proposition \ref{p42}, we know that 
$$\|1_{\varepsilon A}\|\ \le\ 2\mathbf K_b\mathbf C^{\omega}_sc_2.$$
On the other hand, if $j = \min B$, then 
$$\|1_{\delta B}\|\ \ge\ \|e_j\|/\mathbf K_b\ \ge\ c_1/\mathbf K_b,$$
where $c_1$ is in \eqref{ed1}.
Therefore, 
$$\|1_{\varepsilon A}\|\ \le\ 2\mathbf K^2_b\mathbf C^{\omega}_s\frac{c_2}{c_1}\|1_{\delta B}\|.$$

Case 2: $w(B) > \limsup \limits_{n\rightarrow\infty} w_n$. As when we prove quasi-greediness, choose $E$ and $F = E\cup \{N\}$ such that 
$A\cup B < E < \{N\}$ and $w(E)\le w(B) < w(F)$. Set $x:= 1_{\varepsilon A} + 1_F$. Then $F$ is a greedy set of $x$. By $\mathbf C^{\omega}_s$-$\omega$-semi-greediness, there exist $(a_n)_{n\in F}\subset\mathbb{K}$ such that 
\begin{equation}\label{e44}\|1_{\varepsilon A}\|\ \le\ \mathbf K_b\left\|1_{\varepsilon A} + \sum_{n\in F}a_n e_n\right\|\ \le\ \mathbf K_b\mathbf C^{\omega}_s\sigma^{\omega}_F(x)\ \le\ \mathbf K_b\mathbf C^{\omega}_s\|1_F\|.\end{equation}
Now, let $y = 1_{\delta B} + 1_{E}$. Since $B$ is a greedy set of $y$, by $\mathbf C^{\omega}_s$-$\omega$-semi-greediness, we obtain
\begin{equation}\label{e45}\|1_{E}\|\ \le\ (\mathbf K_b+1)\left\|\sum_{n\in B}b_n e_n + 1_{E}\right\|\ \le\ \mathbf C^{\omega}_s(\mathbf K_b +1)\sigma_B^\omega(y)\ \le\ \mathbf C^{\omega}_s(\mathbf K_b+1)\|1_{\delta B}\|,\end{equation}
for some $(b_n)_{n\in B}\subset \mathbb{K}$. Furthermore, if $u = \min E$, 
\begin{equation}\label{e46}\|1_F\|\ \le\ \|1_E\| + \|e_N\|\ \le\ \|1_E\| + c_2\ \le\ \|1_E\| + \frac{c_2}{c_1}\|e_u\|\ \le\ \left(\frac{c_2}{c_1}\mathbf K_b+1\right)\|1_E\|.\end{equation}
From \eqref{e44}, \eqref{e45}, and \eqref{e46}, we obtain
$$\|1_{\varepsilon A}\|\ \le\ (\mathbf C^{\omega}_s)^2\mathbf K_b(\mathbf K_b+1)\left(\frac{c_2}{c_1}\mathbf K_b + 1\right)\|1_{\delta B}\|.$$
Hence, $\mathcal{B}$ is $\omega$-superdemocratic.
\end{proof}

The proof of the next theorem is similar to that of \cite[Theorem 3.2]{DKK} with obvious modifications, so we move the proof to the Appendix. 

\begin{thm}\label{h1}
If a basis $\mathcal{B}$ is quasi-greedy and $\omega$-disjoint superdemocratic, then it is $\omega$-semi-greedy.  
\end{thm}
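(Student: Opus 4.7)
My plan is to adapt the classical argument of Dilworth--Kalton--Kutzarova (\cite[Theorem~3.2]{DKK}) to the $\omega$-weighted setting. First, I would dispose of the degenerate weight cases: by Remark~\ref{r1}, if $\sup_n w_n=\infty$ or $\sum_n w_n<\infty$, then $\mathcal{B}$ is equivalent to the canonical basis of $c_0$, and the conclusion follows immediately. Henceforth I assume $\sum_n w_n=\infty$ and $\sup_n w_n<\infty$.

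Fix $x\in\mathbb{X}$ with greedy set $\Lambda=\Lambda_m(x)$, and let $z=\sum_{n\in A}a_n e_n$ satisfy $w(A\setminus\Lambda)\leq w(\Lambda\setminus A)$. Write $y:=x-z$, $G:=A\cap\Lambda$, $F:=A\setminus\Lambda$, $D:=\Lambda\setminus A$, and $\alpha:=\min_{n\in\Lambda}|e_n^*(x)|$. Since $CG_m(x)$ minimizes $\|x-v\|$ over all $v$ supported on $\Lambda$, it suffices to exhibit one specific such $v$ with $\|x-v\|\leq C\|y\|$, where $C$ depends only on $\mathbf{C}_\ell$ and $\mathbf{C}^\omega_{sd,\sqcup}$. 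The natural candidate is
\[
v\ :=\ \sum_{n\in G}a_n e_n+\alpha\sum_{n\in D}\sgn(e_n^*(y))\,e_n,
\]
for which a direct computation yields
\[
x-v\ =\ y+\sum_{n\in F}a_n e_n-\alpha 1_{\eta D},\qquad\eta_n:=\sgn(e_n^*(y)).
\]

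The term $\alpha\|1_{\eta D}\|$ is bounded by $O(\|y\|)$ via the truncation lemma for quasi-greedy bases, since $|e_n^*(y)|=|e_n^*(x)|\geq\alpha$ on $D$. For the critical term $\|\sum_{n\in F}a_n e_n\|$, I would partition $F$ according to the magnitude of $|a_n|$ relative to $\alpha$. On the ``small'' part $\{n\in F:|a_n|\leq 2\alpha\}$, the unconditional-for-constant-coefficients property of quasi-greedy bases produces a bound of the form $C\alpha\|1_F\|$, and then $\omega$-disjoint superdemocracy (combined with $w(F)\leq w(D)$) transfers the truncation estimate $\alpha\|1_{\eta D}\|\leq C\|y\|$ to $\alpha\|1_F\|\leq C'\|y\|$. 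On the ``large'' part $\{n\in F:|a_n|>2\alpha\}$, one has $|e_n^*(y)|\geq|a_n|/2$, so these indices sit in a greedy-like set of $y$; applying the truncation lemma to $y$ at the appropriate level and combining with $\omega$-disjoint superdemocracy and the sign-invariance and subset-invariance of quasi-greedy norms on equal-coefficient vectors yields the desired bound.

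The main obstacle is precisely the control of this large-coefficient piece on $F$: for a merely quasi-greedy basis, projections $P_F(y)$ onto arbitrary subsets are not bounded, so one must orchestrate the truncation lemma, the UCC property, and $\omega$-disjoint superdemocracy with care, exploiting the greedy-type structure $y$ inherits from $x$ and $z$. This is exactly the step where the parallel with the classical DKK argument must be adapted most delicately. Once the inequality $\|x-v\|\leq C\|y\|$ is established, we obtain
\[
\|x-CG_m(x)\|\ \leq\ \|x-v\|\ \leq\ C\|y\|\ =\ C\|x-z\|,
\]
and taking the infimum over admissible $z$ yields $\|x-CG_m(x)\|\leq C\sigma^\omega_\Lambda(x)$, so $\mathcal{B}$ is $\omega$-semi-greedy.
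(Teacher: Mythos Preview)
Your opening reduction via Remark~\ref{r1} is both unnecessary and inapplicable: Theorem~\ref{h1} is stated for an arbitrary weight $\omega$ in the sense of Definition~\ref{dw}, not a structured one, and the paper's proof makes no case distinction on $(w_n)$ at all. Simply delete that step.

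The substantive gap is your plan to estimate $\|x-v\|\le\|y\|+\bigl\|\sum_{n\in F}a_ne_n\bigr\|+\alpha\|1_{\eta D}\|$ term by term: the middle term is not $O(\|y\|)$ for a conditional quasi-greedy basis. Take any conditional quasi-greedy democratic basis and $y,S$ with $\|P_S(y)\|\ge M\|y\|$ and $|S|\le|\supp(y)\setminus S|$; set $A=S$, $z=-P_S(y)$, $x=y+z=P_{S^c}(y)$, and $m=|S|$. Then $\Lambda\subset S^c$, so $G=\emptyset$, $F=S$, and $\sum_{n\in F}a_ne_n=z=-P_S(y)$ has norm $M\|y\|$ with $M$ arbitrary. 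Your sketch for the ``large'' piece does not help: the relation $|e_n^*(y)|\ge|a_n|/2$ only places $F_{\mathrm{large}}$ inside a greedy set of $y$, but projections onto \emph{subsets} of greedy sets are precisely what quasi-greediness fails to control, and neither the truncation lemma nor the UCC property yields a bound on $\|P_{F_{\mathrm{large}}}(y)\|$.

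The paper avoids this by choosing a different candidate. With $\alpha:=\max_{n\notin\Lambda}|e_n^*(x)|$ and $b_n:=e_n^*(y)$, it sets
\[
x-w\ =\ \sum_{n\in\Lambda}T_\alpha(b_n)e_n+P_{\Lambda^c}(x)\ =\ T_\alpha(y)+\sum_{n\in F}\bigl(e_n^*(x)-T_\alpha(b_n)\bigr)e_n.
\]
The first summand is at most $\mathbf{C}_\ell\|y\|$ by boundedness of the truncation operator (Theorem~\ref{bto}); the second has all coefficients of modulus $\le 2\alpha$ (since $|e_n^*(x)|\le\alpha$ on $F$ and $|T_\alpha(b_n)|\le\alpha$), so norm convexity, then $\omega$-disjoint superdemocracy transferring $F$ to $D$, then \eqref{e16} give the bound $4\mathbf{C}^\omega_{sd,\sqcup}\mathbf{C}_\ell^2\|y\|$. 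The global truncation $T_\alpha(y)$ is exactly the device that absorbs the large-coefficient behaviour your decomposition exposes; build your candidate around it rather than around $\alpha 1_{\eta D}$.
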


\begin{proof}[Proof of Theorem \ref{m8}]
The theorem follows from Theorems \ref{m4}, \ref{m10}, and \ref{h1}.
\end{proof}

\section{$\omega$-partially greedy bases}\label{pgc}
Partially greedy bases were first introduced and characterized in \cite{DKKT} to compare the performance of the TGA to that of the partial sum operators $(S_m)_{m=1}^\infty$. In this section, we characterize $\omega$-partially greedy bases and prove the existence of $\omega$-partially greedy bases that are not $\varsigma$-partially greedy for any sequence weight $\varepsilon$.
For each $m\ge 0$, let $L_m := \{1, 2, \ldots, m\}$. The following is a generalization of \cite[Definition 6.1]{BDKOW} and \cite[Definition 3.4]{B0}, which defines ($\varsigma$-)partial greediness.

\begin{defi}\normalfont
A basis is said to be $\omega$-partially greedy if there exists $C\ge 1$ such that for all $x\in\mathbb{X}$, $m\in\mathbb{N}$, and $\Lambda_m(x)$, we have
$$\|x-G_m(x)\|\ \le\ C\overline{\sigma}^\omega_{\Lambda_m(x)}(x),$$
where 
$$\overline{\sigma}^\omega_{A}(x)\ :=\ \inf\left\{\|x-S_k(x)\|: w(L_k\backslash A)\le w(A\backslash L_k)\right\}.$$ The least such $C$ is denoted by $\mathbf C_p^\omega$.
\end{defi}

We shall characterize $\omega$-partial greediness, generalizing existing characterizations of $\varsigma$-partially greedy bases. In \cite{BBL}, the authors introduce partial symmetry for largest coefficients (PSLC).

\begin{defi}\label{dps}\normalfont
A basis is $C$-$\omega$-PSLC if 
$$\|x+1_{\varepsilon A}\|\ \le\ C\|x+1_{\delta B}\|,$$
for all $x\in \mathbb{X}$ with $\|x\|_\infty\le 1$, for all finite sets $A, B\subset\mathbb{N}$ with $w(A)\le w(B)$ and $ A<\supp(x)\sqcup B$, and for all signs $(\varepsilon), (\delta)$. The least constant $C$ is denoted by $\mathbf C^{\omega}_{pl}$.
\end{defi}

\begin{thm}\label{m9}
Let $\mathcal{B}$ be a basis and $\omega$ be a weight on subsets of $\mathbb{N}$. 
\begin{enumerate}
    \item If $\mathcal{B}$ is $\mathbf C_p^\omega$-$\omega$-partially greedy, then $\mathcal{B}$ is $\mathbf C_p^\omega$-suppression quasi-greedy and is $\mathbf C_p^\omega$-$\omega$-PSLC.
    \item If $\mathcal{B}$ is $\mathbf C_\ell$-suppression quasi-greedy and is $\mathbf C^{\omega}_{pl}$-$\omega$-PSLC, then $\mathcal{B}$ is $\mathbf C_\ell\mathbf C^{\omega}_{pl}$-$\omega$-partially greedy.
\end{enumerate}
\end{thm}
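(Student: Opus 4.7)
The plan is to prove the two directions of Theorem~\ref{m9} separately, mirroring the structure of the proofs of Theorems~\ref{m1} and~\ref{m2} but adapting to partial-sum operators. Suppression quasi-greediness in part~(1) is immediate: for any $x$ and greedy set $\Lambda_m(x)$, the index $k=0$ is admissible in the infimum defining $\overline{\sigma}^\omega_{\Lambda_m(x)}(x)$ because $w(L_0\setminus\Lambda_m(x))=0\le w(\Lambda_m(x))$, yielding $\overline{\sigma}^\omega_{\Lambda_m(x)}(x)\le\|x\|$ and hence $\|x-G_m(x)\|\le\mathbf C_p^\omega\|x\|$.

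The substantive step in part~(1) is $\omega$-PSLC. Given $x,A,B,(\varepsilon),(\delta)$ as in Definition~\ref{dps}, the naive test vector $y=x+1_{\varepsilon A}+1_{\delta B}$ with greedy set $B$ and index $k=\max A$ would reduce the weight condition in $\overline{\sigma}^\omega_B(y)$ to $w(L_{\max A})\le w(B)$, which is strictly stronger than the assumed $w(A)\le w(B)$ whenever $A\neq L_{\max A}$. My fix is to insert a ``buffer'' $T:=L_{\max A}\setminus A$ into the test vector,
\[
y\;:=\;x+1_{\varepsilon A}+1_{\delta B}+1_T,
\]
and to use $B\cup T$ as a greedy set of order $|B|+|T|$; this is admissible because every coefficient of $y$ has modulus at most $1$ and those on $B\cup T$ have modulus exactly $1$. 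Then $y-G_{|B|+|T|}(y)=x+1_{\varepsilon A}$, and with $k=\max A$ one has $L_k=A\cup T$, so $L_k\setminus(B\cup T)=A$ and $(B\cup T)\setminus L_k=B$, reducing the weight condition to exactly $w(A)\le w(B)$. Since $S_{\max A}(y)=1_{\varepsilon A}+1_T$, applying $\omega$-partial greediness yields $\|x+1_{\varepsilon A}\|\le\mathbf C_p^\omega\|x+1_{\delta B}\|$.

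For part~(2), take $x$, a greedy set $\Lambda:=\Lambda_m(x)$, and $k$ with $w(L_k\setminus\Lambda)\le w(\Lambda\setminus L_k)$. Setting $A=L_k\setminus\Lambda$, $B=\Lambda\setminus L_k$, $z:=P_{(\Lambda\cup L_k)^c}(x)$, and $\alpha:=\min_{n\in\Lambda}|e_n^*(x)|$ (assume $\alpha>0$), one has $x-G_m(x)=P_A(x)+z$, $x-S_k(x)=P_B(x)+z$, $A<\supp(z)\sqcup B$, and $w(A)\le w(B)$. I would first establish a Lemma~\ref{l1}-analog for $\omega$-PSLC by the same convexity argument, namely
\[
\|y\|\;\le\;\mathbf C^\omega_{pl}\bigl\|y-P_{A_0}(y)+1_{\varepsilon B_0}\bigr\|
\]
whenever $\|y\|_\infty\le 1$, $A_0\subset\supp(y)$, $A_0<(\supp(y)\setminus A_0)\sqcup B_0$, $B_0\cap\supp(y)=\emptyset$, and $w(A_0)\le w(B_0)$. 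Applied to $y=(P_A(x)+z)/\alpha$ with $\varepsilon_n=\sgn e_n^*(x)$ on $B$, this gives $\|P_A(x)+z\|\le\mathbf C^\omega_{pl}\|z+\alpha 1_{\varepsilon B}\|$. The second step recognizes $z+\alpha 1_{\varepsilon B}$ as the $\alpha$-truncation of the vector $v:=z+P_B(x)=x-S_k(x)$, on which $B$ is a greedy set; the standard truncation bound for $\mathbf C_\ell$-suppression quasi-greedy bases then gives $\|z+\alpha 1_{\varepsilon B}\|\le\mathbf C_\ell\|v\|$. Multiplying the two estimates and taking the infimum over admissible $k$ produces the constant $\mathbf C_\ell\mathbf C^\omega_{pl}$.

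I expect the main obstacle to be the buffer construction in part~(1): the infimum in $\overline{\sigma}^\omega$ only tests $w$ against initial-segment-shaped sets $L_k$, while $\omega$-PSLC allows arbitrary left-lying $A$, so the test vector must artificially extend $A$ to $L_{\max A}$ and absorb the extension into the greedy set. The Lemma~\ref{l1}-analog and the truncation bound in part~(2) are essentially routine once the $\omega$-PSLC hypothesis has been brought into its ``projection'' form.
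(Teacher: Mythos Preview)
Your proposal is correct and follows the template the paper points to (``similar to the proof of Theorem~\ref{m2}''), but you have spotted and resolved a genuine subtlety in part~(1) that the paper's one-line proof does not make explicit. The direct analog of the Property~(A) argument from Theorems~\ref{m1}/\ref{m2}---setting $y=x+1_{\varepsilon A}+1_{\delta B}$ with greedy set $B$ and comparing against $S_{\max A}$---would require $w(L_{\max A})\le w(B)$ rather than the assumed $w(A)\le w(B)$, and since $\omega$ is not assumed monotone this fails when $A\subsetneq L_{\max A}$. Your buffer $T=L_{\max A}\setminus A$, inserted into the test vector and absorbed into the greedy set $B\cup T$, is exactly the right repair: it forces $L_{\max A}\setminus(B\cup T)=A$ and $(B\cup T)\setminus L_{\max A}=B$, so the weight condition collapses to the hypothesis. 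Part~(2) is the expected adaptation of Lemma~\ref{l1} and Theorem~\ref{bto}; the only cosmetic point is that you need not assume $A_0\subset\supp(y)$ in your PSLC analog of Lemma~\ref{l1} (and in your application $A=L_k\setminus\Lambda$ may contain indices where $e_n^*(x)=0$), but the convexity argument goes through verbatim without that inclusion.
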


\begin{proof}
    Similar to the proof of Theorem \ref{m2}.
\end{proof}

\begin{defi}\label{dcs}\normalfont
A basis $\mathcal{B}$ is $\omega$-conservative ($\omega$-superconservative, respectively) if there exists $C\ge 1$ such that
$$\|1_A\| \ \le\  C\|1_B\|,\mbox{ }(\|1_{\varepsilon A}\|\ \le\ C\|1_{\delta B}\|,\mbox{ respectively}),$$
for all $A, B\in \mathbb{N}^{<\infty}$ with $A<B$, $w(A) \le w(B)$, and signs $(\varepsilon), (\delta)$.
\end{defi}

We have the following equivalences, whose proof is similar to that of Theorem \ref{m4} and thus, is left for interested readers. 

\begin{thm}\label{m20}
Let $\mathcal{B}$ be a basis and $\omega$ be a weight on subsets of $\mathbb{N}$. The following are equivalent:
\begin{enumerate}
    \item $\mathcal{B}$ is $\omega$-partially greedy,
    \item $\mathcal{B}$ is quasi-greedy and is $\omega$-PSLC, 
    \item $\mathcal{B}$ is quasi-greedy and $\omega$- superconservative.
    \item $\mathcal{B}$ is quasi-greedy and $\omega$- conservative.
\end{enumerate}
\end{thm}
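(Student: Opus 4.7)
My plan is to mirror the proof of Theorem \ref{m4} almost verbatim, with Theorem \ref{m9} playing the role that Theorem \ref{m1} played there. The equivalence $(1)\Longleftrightarrow(2)$ is immediate from Theorem \ref{m9}. The implications $(2)\Longrightarrow(3)\Longrightarrow(4)$ will be mechanical consequences of unwinding the definitions: setting $x=0$ in the defining inequality of $\omega$-PSLC collapses the constraint $A<\supp(x)\sqcup B$ to $A<B$, which is precisely $\omega$-superconservative; and specialising both sign sequences to the constant $+1$ demotes superconservative to conservative. So the real content lies in $(4)\Longrightarrow(2)$.

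For $(4)\Longrightarrow(2)$, I assume $\mathcal{B}$ is $\mathbf{C}_\ell$-suppression quasi-greedy and $\omega$-conservative and fix $x\in\mathbb{X}$ with $\|x\|_\infty\le 1$, finite sets $A,B$ with $w(A)\le w(B)$ and $A<\supp(x)\sqcup B$, and signs $(\varepsilon),(\delta)$. The plan is to split via the triangle inequality,
$$\|x+1_{\varepsilon A}\|\ \le\ \|x\|+\|1_{\varepsilon A}\|,$$
and to bound each piece by a universal constant times $\|x+1_{\delta B}\|$. For the $\|1_{\varepsilon A}\|$ term, I will invoke the standard quasi-greedy sign-change estimate $\|1_{\varepsilon A}\|\lesssim \|1_A\|$, then $\omega$-conservativeness (which applies since $A<B$ and $w(A)\le w(B)$) to pass to $\|1_A\|\lesssim \|1_B\|$, and finally the sign-change estimate once more to obtain $\|1_B\|\lesssim \|1_{\delta B}\|$. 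For the $\|x\|$ piece, and also to dominate $\|1_{\delta B}\|$ by $\|x+1_{\delta B}\|$, I will use that $B$ is a greedy set of $y:=x+1_{\delta B}$ (because $\supp(x)\cap B=\emptyset$ and $\|x\|_\infty\le 1$), so that quasi-greediness yields $\|x\|=\|y-G_{|B|}(y)\|\le \mathbf{C}_\ell\|y\|$ and hence also $\|1_{\delta B}\|\le (1+\mathbf{C}_\ell)\|y\|$. Chaining these estimates gives $\|x+1_{\varepsilon A}\|\lesssim \|x+1_{\delta B}\|$, which is $\omega$-PSLC.

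The only ingredient outside the material of the excerpt is the sign-change inequality $\|1_{\varepsilon A}\|\lesssim \|1_A\|$ for quasi-greedy bases, which takes the place of the unconditional bound $\mathbf{K}_u$ used in the proof of Theorem \ref{m4}. This is a classical lemma of Wojtaszczyk, which I would cite rather than reprove. Once it is accepted, the entire argument is routine bookkeeping of constants, entirely parallel to the proof of Theorem \ref{m4}, and I anticipate no genuine obstacle beyond tracking the order condition $A<\supp(x)\sqcup B$ and verifying that $\omega$-conservativeness (rather than full $\omega$-democracy) is indeed what the chain of inequalities requires.
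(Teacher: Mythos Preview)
Your proposal is correct and is precisely the argument the paper has in mind: the paper itself gives no proof of Theorem \ref{m20} beyond the remark that it is ``similar to that of Theorem \ref{m4} and thus, is left for interested readers,'' and your sketch carries out exactly that parallel, with Theorem \ref{m9} replacing Theorem \ref{m1} and the quasi-greedy sign-change lemma (unconditionality for constant coefficients) replacing the unconditional constant $\mathbf{K}_u$ in the step $(4)\Rightarrow(2)$. Your handling of the order constraint $A<\supp(x)\sqcup B$ and the use of $B$ as a greedy set of $x+1_{\delta B}$ to bound both $\|x\|$ and $\|1_{\delta B}\|$ are the natural substitutes for the suppression-unconditional estimates in Theorem \ref{m4}, and nothing is missing.
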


The following is an analog of Theorem \ref{m0}.
\begin{thm}\label{m-1}
If a basis $\mathcal{B} = (e_n)_{n=1}^\infty$ is $\varsigma$-partially greedy for some weight sequence $\varsigma = (s(n))_{n=1}^\infty$ with $\inf s(n) > 0$, then $\mathcal{B}$ is partially greedy.
\end{thm}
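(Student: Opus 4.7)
The plan is to reduce everything to the characterization Theorem \ref{m20}. Applied to the weight coming from $\varsigma$, the hypothesis tells us that $\mathcal B$ is quasi-greedy and $\varsigma$-conservative; applied to the cardinality weight, the desired conclusion is that $\mathcal B$ is quasi-greedy and (classically) conservative. Since quasi-greediness is weight-independent, the whole problem collapses to upgrading $\varsigma$-conservativity to classical conservativity, i.e., showing that for all finite $A<B$ with $|A|\le|B|$, $\|1_A\|\le C\|1_B\|$.

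I would split on whether $\sup_n s(n)=\infty$. If it is, then for any finite $A$ I can pick $N>\max A$ with $s(N)\ge s(A)$, and $\varsigma$-conservativity applied to $A<\{N\}$ gives $\|1_A\|\le \mathbf{C}\|e_N\|\le \mathbf{C}c_2$. Combined with the universal lower bound $\|1_B\|\ge\|e_{\min B}\|/\mathbf{K}_b\ge c_1/\mathbf{K}_b$ coming from the uniform boundedness of partial sums, this yields $\|1_A\|/\|1_B\|=O(1)$ and classical conservativity follows.

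The remaining case is $S:=\sup_n s(n)<\infty$. Setting $c:=\inf_n s(n)>0$ we have $s(A)\le S|A|\le S|B|\le (S/c)s(B)$. If $s(A)\le s(B)$ apply $\varsigma$-conservativity directly; otherwise I would walk through the elements of $A$ in increasing order and greedily split $A$ into consecutive blocks $A_1<\cdots<A_K$, opening a new block whenever adding the next element would push the $s$-weight past $s(B)$. Each block then satisfies $A_j<B$ and $s(A_j)\le s(B)$, so $\varsigma$-conservativity gives $\|1_{A_j}\|\le \mathbf{C}\|1_B\|$, and the triangle inequality produces $\|1_A\|\le K\mathbf{C}\|1_B\|$. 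A routine counting argument on the partition forces $(K-1)(s(B)-S)\le s(A)\le (S/c)s(B)$, so $s(B)\ge 2S$ yields $K\le 2S/c+1=O(1)$. In the residual regime $s(B)<2S$ the cardinalities are bounded by $2S/c$, and the estimate $\|1_A\|\le|A|c_2\le (2Sc_2\mathbf{K}_b/(cc_1))\|1_B\|$ closes the argument directly.

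The main technical obstacle is controlling the number of blocks $K$ in the greedy partition: it only stays uniformly bounded once $s(B)$ is a definite multiple of the per-element ceiling $S$, so the case of very small $s(B)$ has to be peeled off and handled via the semi-normalization of the basis. Once classical conservativity is in hand, a final application of Theorem \ref{m20} specialized to the cardinality weight delivers classical partial greediness.
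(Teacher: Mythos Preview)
Your proposal is correct and complete. The overall skeleton is the same as the paper's---reduce to the characterization ``quasi-greedy + conservative'' and then upgrade $\varsigma$-conservativity to classical conservativity under the hypothesis $\inf_n s(n)>0$---but the way you fill in that upgrade is genuinely different and more self-contained. The paper simply cites external results: in the bounded case it invokes \cite[Proposition~4.5]{K} to pass from $\varsigma$-conservative to conservative, and in the unbounded case it invokes \cite[Proposition~4.1]{K} to get equivalence with $c_0$; the endpoint characterizations are quoted from \cite[Theorem~6.4]{BDKOW} and \cite[Theorem~3.4]{DKKT}. You instead run everything through the paper's own Theorem~\ref{m20} and supply an explicit elementary argument: in the unbounded case you only extract conservativity (which suffices) via a single comparison $\|1_A\|\le \mathbf{C}\|e_N\|$, and in the bounded case you use a greedy block decomposition of $A$ into pieces of $\varsigma$-weight at most $s(B)$ together with the ratio bound $s(A)\le(S/c)\,s(B)$ to control the number of blocks. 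The small-$s(B)$ regime, where a single element could in principle be too heavy for the partition, is correctly peeled off and handled by the crude bound $\|1_A\|\le|A|c_2$. What your approach buys is independence from the cited literature; what the paper's approach buys is brevity and, in the $\sup_n s(n)=\infty$ case, the stronger conclusion that $\mathcal B$ is equivalent to the $c_0$ basis.
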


\begin{proof}
We assume that $\mathcal{B}$ is $\varsigma$-partially greedy for some weight sequence $\varsigma = (s(n))_{n=1}^\infty$. By \cite[Theorem 6.4]{BDKOW}, $\mathcal{B}$ is quasi-greedy and is $\varsigma$-conservative. 

If $0 < \inf s(n) \le \sup s(n) < \infty$, then \cite[Proposition 4.5]{K} implies that $\mathcal{B}$ is conservative. According to \cite[Theorem 3.4]{DKKT}, $\mathcal{B}$ is partially greedy. 

If $\sup s(n) = \infty$, then \cite[Proposition 4.1]{K} states that $\mathcal{B}$ is equivalent to the canonical basis of $c_0$ and thus, is greedy. 
\end{proof}

\begin{thm}\label{m21}
There exists a Schauder basis that is $\omega$-partially greedy for some structured weight $\omega$ on sets but is not $\varsigma$-partially greedy for any weight sequence $\varsigma = (s(n))_{n=1}^\infty$ with $\inf s(n) > 0$. 
\end{thm}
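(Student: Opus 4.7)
The plan is to reuse the basis $\mathcal{B} = (e_n)_{n=1}^\infty$ already constructed in the proof of Theorem \ref{m3} and pair it with the set-function $\omega(A) := \|1_A\|$ on finite $A$ (and $\omega(A) := \infty$ on infinite $A$), exactly as in the proof of Corollary \ref{cc}. The basis is unconditional and normalized, hence quasi-greedy, and $\omega$-conservativity holds with constant $1$ because the inequality $\omega(A) \le \omega(B)$ is literally $\|1_A\| \le \|1_B\|$. Theorem \ref{m20} then yields $\omega$-partial greediness of $\mathcal{B}$, provided that $\omega$ is structured. Conditions (a)--(c) of Definition \ref{sw} are immediate from semi-normalization. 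Because $\mathcal{B}$ is normalized, $w_n = \|e_n\| = 1$ for every $n$, so $\sum_{n\in A} w_n = |A|$; this makes (d) trivial and reduces (e) to $\|1_A\|\to\infty$ as $|A|\to\infty$, which follows from the closed-form
\[
\|1_A\| \;=\; \sum_{k=1}^{|A\cap P|} \frac{1}{\sqrt{k}} \;+\; \sum_{k=1}^{|A\backslash P|} \frac{1}{k}.
\]

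The second step is to show that $\mathcal{B}$ is not partially greedy, which, since $\mathcal{B}$ is already quasi-greedy, reduces by \cite[Theorem 3.4]{DKKT} (equivalently, the cardinality-weight instance of Theorem \ref{m20}) to producing a failure of conservativity. For each $N\ge 1$, set $A_N := \{2^1, 2^2,\ldots,2^N\}\subset P$ and let $B_N$ be any $N$-element subset of $(2^N,2^{N+1})$; such a $B_N$ exists since that interval contains $2^N-1$ integers and no powers of $2$. Then $A_N < B_N$, $|A_N|=|B_N|=N$, and the formula above gives
\[
\|1_{A_N}\|\ =\ \sum_{k=1}^N \frac{1}{\sqrt{k}} \ \sim \ 2\sqrt{N}, \qquad \|1_{B_N}\|\ =\ \sum_{k=1}^N\frac{1}{k} \ \sim \ \ln N,
\]
so the ratio $\|1_{A_N}\|/\|1_{B_N}\|$ diverges. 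Hence $\mathcal{B}$ is not conservative and so not partially greedy, and Theorem \ref{m-1} immediately rules out $\varsigma$-partial greediness for every weight sequence $\varsigma$ with $\inf_n s(n) > 0$.

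The main (and essentially only) technical obstacle is verifying condition (f) of Definition \ref{sw}. A four-case split on whether each of $N, n$ lies in $P$ gives $\|e_N+e_n\| \in \{1+\tfrac{1}{\sqrt{2}},\ 1+\tfrac{1}{2},\ 2\}$, while $\|e_n\|=1$; hence $\|e_N+e_n\|-\|e_n\|\ge \tfrac{1}{2}$ for every $N$ and every $n\ne N$, so (f) holds at every $N$ with $\varepsilon = \tfrac{1}{4}$. With that case analysis in hand, all pieces are in place, and the proof reduces to assembling the verifications above with Theorems \ref{m20} and \ref{m-1} applied to the basis from Theorem \ref{m3}.
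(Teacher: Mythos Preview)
Your proposal is correct and follows essentially the same route as the paper: both use the basis from Theorem~\ref{m3} with the weight $\omega(A)=\|1_A\|$, show it fails conservativity (hence is not partially greedy), and then invoke Theorem~\ref{m-1}. The only cosmetic differences are that the paper uses $B=\{3^{N+1},\ldots,3^{2N}\}$ rather than your $B_N\subset(2^N,2^{N+1})$, and the paper leaves the ``$\omega$-partially greedy for a structured $\omega$'' half implicit (it was already established in Theorem~\ref{m3} that $\mathcal{B}$ is $\omega$-greedy for this structured $\omega$, which immediately gives $\omega$-partial greediness), whereas you spell out the verification of Definition~\ref{sw} and apply Theorem~\ref{m20} directly.
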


\begin{proof}
The basis $\mathcal{B}$ in Section \ref{exa} is not conservative. To see this, simply pick $A = \{2, 2^2, \ldots, 2^N\}$ and $B = \{3^{N+1}, \ldots, 3^{2N}\}$. We have $\|1_A\|/\|1_B\| \sim \sqrt{N}/\ln(N)\rightarrow\infty$ as $N\rightarrow\infty$. Hence, $\mathcal{B}$ is not partially greedy due to \cite[Theorem 3.4]{DKKT}. 
Applying Theorem \ref{m-1}, we obtain the desired conclusion. 
\end{proof}

\begin{rek}\normalfont
Theorem \ref{m21} is sharp in the sense that we cannot drop the requirement $\inf s(n) > 0$. Indeed, Khurana \cite{K} characterized $\varsigma$-partially greedy bases by quasi-greediness and the so-called $\varsigma$-left-Property (A). By \cite[Remark 3.3]{K}, any basis trivially satisfies $\varsigma$-left-Property (A) with $\varsigma = (s(n))_{n=1}^\infty = (2^{-n})_{n=1}^\infty$. Hence, if we have an $\omega$-partially greedy, it is quasi-greedy by Theorem \ref{m9} and has $\varsigma$-left-Property (A) for $s(n) = 2^{-n}$. Therefore, the basis is automatically $\varsigma$-partially greedy. 
\end{rek}

\section{Questions and discussion}\label{que}
We list several open questions for future research. 

\begin{enumerate}
    \item[Q1] We show that for a structured weight $\omega$, a basis is $\omega$-almost greedy if and only if it is $\omega$-semi-greedy. Does the result hold for a larger class of weights? 
    \item[Q2] For weights in Definition \ref{dw}, is an $\omega$-disjoint superdemocratic basis also $\omega$-superdemocratic? If not, what minimal condition(s) to put on $\omega$ so that the two properties are equivalent.  
\end{enumerate}

For the second question, we know that for a structured weight, an $\omega$-disjoint superdemocratic is $\omega$-superdemocratic.

\begin{prop}\label{p50}
For a structured weight $\omega$, a basis $\mathcal{B}$ is $\omega$-superdemocratic if and only if $\mathcal{B}$ is $\omega$-disjoint superdemocratic. 
\end{prop}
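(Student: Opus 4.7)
The forward direction is immediate (disjoint superdemocracy is a restricted case of superdemocracy), so the content is that $\omega$-disjoint superdemocracy, combined with structuredness of $\omega$, forces $\omega$-superdemocracy. My plan is to mirror Case~2 of the $\omega$-superdemocratic half of the proof of Theorem~\ref{m10}, replacing each use of $\omega$-semi-greediness by a direct appeal to disjoint superdemocracy (which is essentially all that was extracted there, up to constants).

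Given $A,B\in\mathbb{N}^{<\infty}$ with $w(A)\le w(B)$ and signs $(\varepsilon),(\delta)$, I split on $\sup_n w_n$ and $\sum_n w_n$. If $\sup_n w_n=\infty$ or $\sum_n w_n<\infty$, then the disjoint-superdemocratic analog of Proposition~\ref{p42}(2) (Remark~\ref{r1}) makes $\mathcal B$ equivalent to the canonical $c_0$-basis, and the conclusion is trivial. Otherwise $\sup_n w_n<\infty$ and $\sum_n w_n=\infty$. If additionally $w(B)\le\limsup_n w_n$, then also $w(A)\le\limsup_n w_n$, so Remark~\ref{r1} applied to Proposition~\ref{p42}(1) bounds $\|1_{\varepsilon A}\|$ by a constant depending only on the basis, while $\|1_{\delta B}\|\ge 1/c_2^*$ (biorthogonality) closes the case.

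The main case is $w(B)>\limsup_n w_n$. Choose $N_0>\max(A\cup B)$ with $w_{N_0}\le w(B)$ (possible since $w_n<w(B)$ for large $n$), and using condition~(e) of a structured weight together with $\sum_n w_n=\infty$, take the least $k\ge 0$ for which
\[
 w(E)\ \le\ w(B)\ <\ w(F),\qquad E:=\{N_0,N_0+1,\dots,N_0+k\},\ F:=E\cup\{N_0+k+1\}.
\]
Both $E,F$ are disjoint from $A\cup B$, and satisfy $w(A)\le w(F)$ as well as $w(E)\le w(B)$. Two applications of disjoint superdemocracy then give $\|1_{\varepsilon A}\|\lesssim\|1_F\|$ and $\|1_E\|\lesssim\|1_{\delta B}\|$; a triangle inequality passes from $\|1_F\|$ to $\|1_E\|$ at a cost of $\|e_{N_0+k+1}\|\le c_2$, which is absorbed into $\|1_{\delta B}\|$ via $\|1_{\delta B}\|\ge 1/c_2^*$. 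Combining gives $\|1_{\varepsilon A}\|\le K\|1_{\delta B}\|$ with $K$ depending only on $\mathbf C^{\omega}_{sd,\sqcup}$, $c_2$, and $c_2^*$.

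The principal obstacle is the construction of $E$ and $F$: because $\omega$ is not assumed additive, one cannot in general find a disjoint auxiliary set whose weight lies exactly in $[w(A),w(B)]$, and condition~(e) only guarantees that one can drive the weight to infinity by appending elements. The workaround adopted here is to bracket $w(B)$ from below and from above rather than to hit it exactly, paying for the bracket with the single extra basis vector $e_{N_0+k+1}$ whose cost is absorbed by the semi-normalization constants. Everything else is routine bookkeeping.
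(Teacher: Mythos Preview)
Your proof is correct and follows essentially the same strategy as the paper: reduce to $c_0$ when $\sup_n w_n=\infty$ or $\sum_n w_n<\infty$, handle the small-weight case via Proposition~\ref{p42}(1)/Remark~\ref{r1}, and in the remaining case interpolate through auxiliary sets $E\subset F$ lying beyond $A\cup B$. The only cosmetic differences are that the paper case-splits and brackets on $w(A)$ rather than $w(B)$, and absorbs the extra basis vector $e_N$ into $\|1_E\|$ via $c_1$ and $\mathbf K_b$ (as in \eqref{e46}) rather than into $\|1_{\delta B}\|$ via $c_2^*$; both choices work equally well.
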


\begin{proof}
Assume that $\mathcal{B}$ is $\omega$-disjoint superdemocratic. Let $A, B\in\mathbb{N}^{<\infty}$ with $w(A) \le w(B)$. Pick signs $(\varepsilon), (\delta)$.
If $\sum_{n=1}^\infty w_n < \infty$ or $\sup_n w_n = \infty$, then by Proposition \ref{p42} and Remark \ref{r1}, $\mathcal{B}$ is equivalent to the canonical basis of $c_0$, and the desired conclusion follows trivially. 
For the rest of the proof, we assume that $\sum_{n=1}^\infty w_n = \infty$ and $\sup_n w_n < \infty$.

Case 1: $w(A)\le \limsup\limits_{n\rightarrow\infty} w_n$. We proceed as in the proof of Theorem \ref{m10} to show $\|1_{\varepsilon A}\|\lesssim \|1_{\delta B}\|$.

Case 2: $w(A) > \limsup\limits_{n\rightarrow\infty} w_n$. Choose $E$ and $F = E\cup \{N\}$ such that 
$A\cup B < E < \{N\}$ and $w(E)\le w(A) < w(F)$.
By $\mathbf C^\omega_{sd,\sqcup}$-$\omega$-disjoint superdemocracy and \eqref{e46}, we have
$$\|1_{\varepsilon A}\|\ \le\ \mathbf C^\omega_{sd,\sqcup}\|1_F\|\ \le\ \mathbf C^\omega_{sd,\sqcup}\left(\frac{c_2}{c_1}\mathbf K_b+1\right)\|1_E\|\ \le\ (\mathbf C^\omega_{sd,\sqcup})^2\left(\frac{c_2}{c_1}\mathbf K_b+1\right)\|1_{\delta B}\|.$$
Therefore, $\mathcal{B}$ is $\omega$-superdemocratic. 
\end{proof}

\section{Appendix}

\subsection{Proof of Theorem \ref{m2}}
The key input is the uniform boundedness of the truncation function. For each $\alpha > 0$, we define the truncation function $T_\alpha$ as follows: for  $b\in\mathbb{K}$,
$$T_{\alpha}(b)\ =\ \begin{cases}\sgn(b)\alpha, &\mbox{ if }|b| > \alpha,\\ b, &\mbox{ if }|b|\le \alpha.\end{cases}$$
We define the truncation operator $T_\alpha: \mathbb{X}\rightarrow \mathbb{X}$ as
$$T_{\alpha}(x)\ =\ \sum_{n=1}^\infty T_\alpha(e_n^*(x))e_n \ =\ \alpha 1_{\varepsilon \Gamma_{\alpha}(x)}+ P_{\Gamma_\alpha^c(x)}(x),$$
where $\Gamma_\alpha(x) = \{n: |e_n^*(x)| > \alpha\}$ and $\varepsilon_n = \sgn(e_n^*(x))$ for all $n\in \Gamma_\alpha(x)$. The operator $T_\alpha$ is well-defined as $|\Gamma_\alpha(x)| < \infty$ for all $\alpha > 0$ and $x\in \mathbb{X}$.

\begin{thm}\label{bto}\cite[Lemma 2.5]{BBG} Let $\mathcal{B}$ be $\mathbf{C}_\ell$-suppression quasi-greedy. Then for any $\alpha > 0$, $\|T_\alpha\|\le \mathbf{C}_\ell$.
\end{thm}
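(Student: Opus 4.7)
The plan is to bound $\|T_\alpha(x)\|$ by a single invocation of the suppression quasi-greedy inequality applied to a carefully chosen auxiliary vector. By density of $c_{00}\subset\mathbb{X}$ and coordinate-wise continuity of $T_\alpha$, it suffices to treat $x\in c_{00}$, since $T_\alpha$ acts via the biorthogonals and extends by continuity to all of $\mathbb{X}$ once it is bounded on a dense subspace.

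Set $A:=\Gamma_\alpha(x)$, $m:=|A|$, and $\varepsilon_n:=\sgn(e_n^*(x))$ for $n\in A$, so that $T_\alpha(x) = P_{A^c}(x) + \alpha\, 1_{\varepsilon A}$. The set $A$ is already a greedy set of $x$ of order $m$, because $|e_n^*(x)|>\alpha\ge |e_n^*(x)|$ for $n\in A$ and $n\notin A$ respectively. A naive application of suppression quasi-greediness then gives $\|P_{A^c}(x)\| = \|x - G_m(x)\|\le \mathbf{C}_\ell\|x\|$, but this misses the $\alpha 1_{\varepsilon A}$ layer that appears in $T_\alpha(x)$.

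To capture the full sum, my strategy is to exhibit $T_\alpha(x)$ as $y-G_m(y)$ for an auxiliary vector $y$ and a greedy selection of order $m$ satisfying $\|y\|\le \|x\|$; suppression quasi-greediness will then deliver $\|T_\alpha(x)\| = \|y-G_m(y)\|\le \mathbf{C}_\ell\|y\|\le \mathbf{C}_\ell\|x\|$. A natural candidate is $y:= T_\alpha(x) + \lambda\, 1_{\eta B}$, where $B$ is a finite block disjoint from $\supp(x)$ with $|B|=m$, $\eta$ any choice of signs, and $\lambda\ge\alpha$. For such $y$, the block $B$ is a legitimate greedy set of $y$ of order $m$ (since $|e_n^*(y)|=\lambda\ge\alpha=\max_{B^c}|e_n^*(y)|$) and the identity $y-G_m(y)=T_\alpha(x)$ holds on the nose. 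The control $\|y\|\le\|x\|$ would be obtained by a companion application of the quasi-greedy inequality to the augmented vector $z:=x+\lambda\,1_{\eta B}$, which still admits $A$ as a greedy set of order $m$, yielding $\|P_{A^c}(x)+\lambda\, 1_{\eta B}\| \le \mathbf{C}_\ell\|z\|$, and then by a limit $\lambda\downarrow\alpha$ together with a substitution of the block $\lambda\, 1_{\eta B}$ for the layer $\alpha\, 1_{\varepsilon A}$.

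The main obstacle is this last reconciliation step: bounding $\|T_\alpha(x)+\lambda\, 1_{\eta B}\|$ by $\|x\|$ without picking up an extra multiplicative factor in front of $\mathbf{C}_\ell$. The delicate bookkeeping consists of balancing the freshly added block $\lambda 1_{\eta B}$ against the collapsed layer $\alpha 1_{\varepsilon A}$, effectively substituting one size-$m$ block for another while monitoring the norm. Getting this substitution to be norm-preserving in the limit is what allows the sharp constant $\mathbf{C}_\ell$ to be recovered from a single application of suppression quasi-greediness, rather than the weaker $\mathbf{C}_\ell^2$ that two separate applications would produce.
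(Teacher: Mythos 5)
Your reduction to a single application of the suppression inequality hinges entirely on producing an auxiliary vector $y$ with $T_\alpha(x)=y-G_m(y)$ \emph{and} $\|y\|\le\|x\|$, and that second requirement is exactly where the argument fails. For your candidate $y=T_\alpha(x)+\lambda 1_{\eta B}$ with $B$ a fresh block of size $m$, the inequality $\|T_\alpha(x)+\lambda 1_{\eta B}\|\le\|x\|$ is false in general: a quasi-greedy basis need not be democratic, so there is no way to ``substitute the block $\lambda 1_{\eta B}$ for the layer $\alpha 1_{\varepsilon A}$'' in a norm-controlled way. Concretely, take $x$ supported on $A$ with all coefficients just above $\alpha$, so $\|x\|\approx\alpha\|1_{\varepsilon A}\|$ and $T_\alpha(x)=\alpha 1_{\varepsilon A}$; if $\|1_{\eta B}\|\gg\|1_{\varepsilon A}\|$ (as happens, e.g., for the unconditional non-democratic basis built in Section 3 of this paper, with $A$ off the powers of $2$ and $B$ among them), then $\|y\|\ge\lambda\|1_{\eta B}\|/\mathbf K_s$ dwarfs $\|x\|$. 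Your proposed companion estimate $\|P_{A^c}(x)+\lambda 1_{\eta B}\|\le\mathbf C_\ell\|x+\lambda 1_{\eta B}\|$ does not help either: its left side omits the layer $\alpha 1_{\varepsilon A}$ you need to control, and its right side is not bounded by $\|x\|$. So the ``delicate bookkeeping'' you defer is not bookkeeping but the entire content of the lemma, and the route through a disjoint block cannot deliver it.

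For the record, the paper itself does not prove this statement; it cites \cite[Lemma 2.5]{BBG}. The standard argument there keeps everything supported inside $\supp(x)$ and uses convexity rather than a single suppression step: order $A=\Gamma_\alpha(x)=\{n_1,\dots,n_m\}$ so that $a_j:=|e_{n_j}^*(x)|$ is nonincreasing, note that each $\{n_1,\dots,n_k\}$ is a greedy set of $x$, and check that with $t_0=\alpha/a_1$, $t_k=\alpha/a_{k+1}-\alpha/a_k$ for $1\le k\le m-1$, $t_m=1-\alpha/a_m$ one has $t_k\ge 0$, $\sum_{k=0}^m t_k=1$, and
\begin{equation*}
T_\alpha(x)\ =\ \sum_{k=0}^{m} t_k\bigl(x-G_k(x)\bigr),
\end{equation*}
whence $\|T_\alpha(x)\|\le\mathbf C_\ell\|x\|$ by the triangle inequality. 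This is the ``one multiplicative factor of $\mathbf C_\ell$'' mechanism you were looking for: not one application of the suppression inequality, but a convex combination of many.
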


\begin{proof}[Proof of Theorem \ref{m2}]
Assume that $\mathcal{B}$ is $\mathbf C^{\omega}_{al}$-$\omega$-almost greedy. Let $x\in\mathbb{X}$ and $A$ be a greedy set of $x$. We have
$$\|x-P_A(x)\|\ \le\ \mathbf C^{\omega}_{al}\widetilde{\sigma}^{\omega}_A(x)\ \le\ \mathbf C^{\omega}_{al}\|x-P_\emptyset(x)\|\ =\ \mathbf C^{\omega}_{al}\|x\|.$$
The proof of $\mathbf C^{\omega}_{al}$-$\omega$-Property (A) uses the exact argument as in the proof of Theorem \ref{m1}, so we skip it. 

Now assume that $\mathcal{B}$ is $\mathbf C_\ell$-suppression quasi-greedy and satisfies $\mathbf C^{\omega}_b$-$\omega$-Property (A). Let $x\in\mathbb{X}$ and $A$ be a greedy set of $x$. Let $B\in\mathbb{N}^{<\infty}$ such that $w(B\backslash A)\le w(A\backslash B)$. By Lemma \ref{l1} and Theorem \ref{bto}, we have
\begin{align*}
    \|x-P_A(x)\|&\ \le\ \mathbf C_b^{\omega}\left\|x-P_A(x)-P_{B\backslash A}(x) + \alpha\sum_{n\in A\backslash B}\sgn(e_n^*(x))e_n\right\|\\
    &\ \le\ \mathbf C_b^{\omega}\left\|P_{(A\cup B)^c}(x) + \alpha\sum_{n\in A\backslash B}\sgn(e_n^*(x))e_n\right\|\\
    &\ =\ \mathbf C_b^{\omega}\left\|T_\alpha(P_{(A\cup B)^c}(x) + P_{A\backslash B}(x))\right\|\\
    &\ \le\ \mathbf C_\ell\mathbf C_b^{\omega}\left\|x-P_B(x)\right\|.
\end{align*}
This completes our proof that $\mathcal{B}$ is $\mathbf C_\ell\mathbf C_b^{\omega}$-$\omega$-almost greedy.

\subsection{Proof of Theorem \ref{h1}}
\begin{lem}\cite[Lemma 2.3]{BBG}.
Let $\mathcal{B}$ be a $\mathbf C_\ell$-suppression quasi-greedy basis and $x\in \mathbb{X}$. If $A$ is a greedy set of $x$, then
\begin{equation}\label{e16}\min_{n\in A}| e_n^*(x)|\left\|\sum_{n\in A}\varepsilon_ne_n\right\|\ \le\ 2\mathbf C_\ell\left\|x\right\|,\end{equation}
where $\varepsilon_n  = \sgn(e^*_n(x))$.
\end{lem}

\begin{proof}[Proof of Theorem \ref{h1}]
Let us assume that $\mathcal{B}$ is $\mathbf C_\ell$-suppression quasi-greedy and $\mathbf C^\omega_{sd,\sqcup}$-$\omega$-disjoint superdemocratic. Let $x\in\mathbb{X}$ with $|\supp(x)| < \infty$ and $\Lambda_m(x)$ be a greedy set. Fix $\varepsilon > 0$. Let $y = \sum_{n\in A}a_ne_n$, where $A\in \mathbb{N}^{<\infty}, w(A\backslash \Lambda_m(x)) \le w(\Lambda_m(x)\backslash A)$ and $\|x-y\| < \sigma^\omega_{\Lambda_m(x)}(x) + \varepsilon.$ Write $x-y = \sum_{n=1}^\infty b_ne_n$, where $b_n = e_n^*(x)-a_n$ if $n\in A$ and $b_n = e_n^*(x)$ if $n\notin A$. We shall find a vector $w$ with $\supp(w)\subset \Lambda_m(x)$ such that \begin{equation}\label{e48}\|x-w\|\ \le \ \mathbf C_\ell(1 + 4\mathbf C^{\omega}_{sd, \sqcup}\mathbf C_\ell)(\sigma^\omega_{\Lambda_m(x)}(x)+\varepsilon).\end{equation}
Set $\alpha := \max_{n\notin \Lambda_m(x)}|e_n^*(x)|$. If $\alpha = 0$, then choose $w = x$ and we are done. Assume that $\alpha > 0$. Consider the following vector:
\begin{align}z &\ :=\ \sum_{n\in \Lambda_m(x)}T_\alpha (b_n)e_n + P_{ \Lambda_m(x)^c}(x)\label{e35}\\
&\ =\ \sum_{n\in \Lambda_m(x)}T_{\alpha}(b_n)e_n + \sum_{n\notin A\cup\Lambda_m(x)}T_{\alpha}(b_n)e_n + \sum_{n\in A\backslash \Lambda_m(x)}e_n^*(x)e_n\nonumber\\
&\ =\ \sum_{n\notin A\backslash \Lambda_m(x)}T_\alpha(b_n)e_n + \sum_{n\in A\backslash \Lambda_m(x)}e_n^*(x)e_n\nonumber\\
&\ =\ \sum_{n=1}^\infty T_\alpha(b_n)e_n + \sum_{n\in A\backslash \Lambda_m(x)}(e_n^*(x)-T_\alpha(b_n))e_n.\label{e36}
\end{align}
We claim that $x-z$ is a choice for $w$. Indeed, using \eqref{e35}, we know that $\supp(w) = \supp(x-z)\subset \Lambda_m(x)$. By Theorem \ref{bto}, we have
\begin{equation}\label{e37}\left\|\sum_{n=1}^\infty T_\alpha(b_n)e_n\right\|\ \le\ \mathbf{C}_\ell\|x-y\|.\end{equation}
Note that $|e_n^*(x)-T_\alpha(b_n)|\le 2\alpha$ for all $n\in A\backslash \Lambda_m(x)$. Let $\eta = (\sgn(e_n^*(x-y))_{n=1}^\infty$. We have
\begin{align*}
    \left\|\sum_{n\in A\backslash \Lambda_m(x)}(e_n^*(x)-T_\alpha(b_n))e_n\right\|&\ \le\ 2\alpha\sup_{(\delta)}\left\| 1_{\delta A\backslash \Lambda_m(x)}\right\|\\
    &\ \le\ 2\mathbf C^\omega_{sd, \sqcup}\min_{n\in \Lambda_m(x)\backslash A}|e_n^*(x-y)|\|1_{\eta\Lambda_m(x)\backslash A}\|.
\end{align*}
Let $B := \{n: |e_n^*(x-y)|\ge\min_{n\in \Lambda_m(x)\backslash A}|e_n^*(x-y)|\}$. Then $B$ is a greedy set of $x-y$ and $\Lambda_m(x)\backslash A\subset B$. Therefore, we obtain
$$\|1_{\eta\Lambda_m(x)\backslash A}\|\ \le\ \mathbf C_\ell\|1_{\eta B}\|$$
and so, by \eqref{e16},
\begin{align}\label{e47}\left\|\sum_{n\in A\backslash \Lambda_m(x)}(e_n^*(x)-T_\alpha(b_n))e_n\right\|&\ \le\ 2\mathbf C^\omega_{sd, \sqcup}\mathbf C_\ell\min_{n\in \Lambda_m(x)\backslash A}|e_n^*(x-y)|\|1_{\eta B}\|\nonumber\\
&\ \le\ 4\mathbf C^\omega_{sd, \sqcup}\mathbf C^2_\ell\|x-y\|.\end{align}
Using \eqref{e36}, \eqref{e37}, and \eqref{e47}, we obtain \eqref{e48}. Therefore, 
$$\|x-CG_m(x)\| \ \le\ \mathbf C_\ell(1 + 4\mathbf C^{\omega}_{sd, \sqcup}\mathbf C_\ell)(\sigma^\omega_{\Lambda_m(x)}(x)+\varepsilon).$$
Letting $\varepsilon\rightarrow 0$ completes the proof. 
\end{proof}

\end{proof}

\ \\
\end{document}